\documentclass[pdflatex,sn-mathphys-num]{sn-jnl}

\usepackage{graphicx}%
\usepackage{multirow}%
\usepackage{amsmath,amssymb,amsfonts}%
\usepackage{amsthm}%
\usepackage[title]{appendix}%
\usepackage{xcolor}%
\usepackage{textcomp}%
\usepackage{manyfoot}%
\usepackage{booktabs}%
\usepackage{array}%
\usepackage{enumitem}%
\usepackage{bookmark}%
\usepackage{tikz}%
\usetikzlibrary{svg.path}%
\definecolor{orcidlogocol}{HTML}{A6CE39}%
\newlength{\orcidbadgeheight}%
\tikzset{orcidbadge/.pic={%
  \fill[orcidlogocol] svg{M256,128c0,70.7-57.3,128-128,128C57.3,256,0,198.7,0,128C0,57.3,57.3,0,128,0C198.7,0,256,57.3,256,128z};%
  \fill[white] svg{M86.3,186.2H70.9V79.1h15.4v48.4V186.2z}%
               svg{M108.9,79.1h41.6c39.6,0,57,28.3,57,53.6c0,27.5-21.5,53.6-56.8,53.6h-41.8V79.1z M124.3,172.4h24.5c34.9,0,42.9-26.5,42.9-39.7c0-21.5-13.7-39.7-43.7-39.7h-23.7V172.4z}%
               svg{M88.7,56.8c0,5.5-4.5,10.1-10.1,10.1c-5.6,0-10.1-4.6-10.1-10.1c0-5.6,4.5-10.1,10.1-10.1C84.2,46.7,88.7,51.3,88.7,56.8z};}}%
\renewcommand{\orcidlogo}{%
  \texorpdfstring{%
    \setlength{\orcidbadgeheight}{\fontcharht\font`X}%
    \begin{tikzpicture}[yscale=-\orcidbadgeheight/256, xscale=\orcidbadgeheight/256, transform shape]%
      \pic{orcidbadge};%
    \end{tikzpicture}}{}}%

\theoremstyle{thmstyleone}%
\newtheorem{theorem}{Theorem}[section]%
\newtheorem{lemma}[theorem]{Lemma}%
\newtheorem{proposition}[theorem]{Proposition}%
\newtheorem{corollary}[theorem]{Corollary}%
\theoremstyle{thmstyletwo}%
\newtheorem{remark}[theorem]{Remark}%

\newcommand{\PG}{\mathrm{PG}}
\newcommand{\PGL}{\mathrm{PGL}}
\newcommand{\F}{\mathbb{F}}
\newcommand{\Vone}{\mathcal{V}_1}
\newcommand{\Mb}[2]{M_{3}(#1,#2)}

\begin{document}

\title[The climb problem for $4$-general sets]{The climb problem for
$4$-general sets in \texorpdfstring{$\PG(n,4)$}{PG(n,4)}}

\author*[1]{\fnm{Liangdong} \sur{Lu}\orcid{https://orcid.org/0000-0002-7776-6798}}\email{kelinglv@163.com}
\author*[1]{\fnm{Ruipan} \sur{Yang}}\email{yangruipan@aliyun.com}
\author[1]{\fnm{Qiang} \sur{Fu}}
\author[1]{\fnm{Hao} \sur{Song}}

\affil*[1]{\orgdiv{Department of Basic Science}, \orgname{Air Force Engineering University}, \orgaddress{\city{Xi'an}, \postcode{710051}, \state{Shaanxi}, \country{China}}}

\abstract{A $4$-general set of $\PG(n,q)$ is a point set with no four coplanar,
and $\Mb{n}{q}$ is the largest such size.  The \emph{climb problem} asks, at
each $n$, whether the constructions of Pavese (2025) can be improved by one
point.  We show that the answer is governed by the code tables:
$\PG(M-1,q)$ contains an $N$-point $4$-general set exactly when a projective
$[N,N-M,\ge 5]_q$ code exists.  Over $\F_4$ this settles the first two rungs:
$\Mb{3}{4}=5$ and $\Mb{4}{4}=11$, each with two independent proofs, so Pavese's
constructions are optimal at both.  Over $\F_5$ the same reduction gives
$\Mb{4}{5}=12$, one more than the near-MDS lower bound.  At $n=5$ the tables
fall silent; we state the verified position $21\le\Mb{5}{4}\le 30$ rather than
the tabulated $\le 29$, which rests on a private communication.  Any
$22$-point $4$-general set of $\PG(5,4)$, if one exists, shares at most $12$
points with Pavese's extremal $21$-point set; the proof of this rigidity
statement is a finite computation, with certificates in Appendix~A.}\keywords{$4$-general set, cap, finite projective space $\PG(n,4)$, linear code over $\F_4$, near-MDS set, rigidity}

\pacs[MSC Classification]{51E22, 94B05, 51E21}

\maketitle

\section{Introduction}\label{sec:intro}

Let $q$ be a prime power.  Following \cite{Pavese}, a set $X$ of points of
$\PG(n,q)$ is called \emph{$(r+2)$-general} if any $r+2$ distinct points of $X$
are in general position, and it is \emph{complete} if it is not contained in a
larger such set.  We are interested in the case $r=2$, i.e.\ in sets no four of
whose points are coplanar; we call them $4$-general.  Unlike \cite{Pavese}
we do not require the set to span the space; the largest such sets span in any
case (Corollary~\ref{cor:main}).  Let $\Mb{n}{q}$ denote the size of the
largest $4$-general set of $\PG(n,q)$.

The term $(r+2)$-general specialises the classical notion of a
$(|X|;r,s,n,q)$-set, in which every $s$-dimensional subspace contains at most
$r$ points \cite{HirschfeldStorme}; a cap is a $3$-general set and an arc is an
$(n+1)$-general set.  It also has a coding-theoretic expression: a
$4$-general set of $N$ points spanning $\PG(M-1,q)$ is the column set of a
parity check matrix of a projective $[N,N-M,\ge 5]_q$ code
(Section~\ref{sec:red}).  Point sets whose smallest dependency has size $5$,
the \emph{near-MDS sets}, have a long history, going back to
\cite{DodunekovLandgev,Giulietti}, and the non-existence question for
$[N,N-M,5]_q$ codes has an extensive literature of its own.  Over $\F_4$ the
case that decides our $n=4$, namely $[12,7,5]_4$, was settled independently by
Bierbrauer, Edel, Faina, Marcugini and Pambianco \cite{BierbrauerEdel}; the
tables themselves go back to \cite{BBK2001}.

A $4$-general set of $\PG(4,q)$ with \emph{some} five points on a solid and at
most five on any solid is what \cite{CCMP} calls an \emph{NMDS-set} of
$\PG(4,q)$: every four of its points generate a solid, some five lie on one,
and every six generate the space
\cite[Sec.~5.1]{Pavese}.  Equivalently it is the column set of a generator matrix
of a near-MDS code of dimension $5$, and the maximal length
$m'(k,q)$ of such a code of dimension $k$ satisfies
\begin{equation}\label{eq:nmds}
  m'(k,q)\le 2q+k \quad\text{for all }q,
  \qquad\text{and}\qquad
  m'(k,q)\le 2q+k-2 \quad\text{for }q>3,
\end{equation}
by \cite[Props.~5.1 and~6.2]{DodunekovLandgev}.  At $(k,q)=(5,4)$ the second
bound reads $11$.  The $11$-point set of Theorem~\ref{thm:n4} is an NMDS-set of
$\PG(4,4)$, so it attains \eqref{eq:nmds}; but a $4$-general set need not be an
NMDS-set: the six points $e_1,\dots,e_5,e_1+\cdots+e_5$ of $\PG(4,4)$ are
$4$-general, yet no five of them lie on a solid.  Closing that gap is
what Section~\ref{sec:n4} does, and it yields a second proof of
$\Mb{4}{4}\le 11$ that never consults a table.

Small complete $4$-general sets in $\PG(n,q)$ have been classified for small
$n$ and $q$; see \cite[Tables~1 and~2]{Pavese} for the cases $q=2$, $n\le6$, and
$q\in\{3,4,5,7,8\}$, $n=3$, together with $\PG(4,3)$, where the extremal object
is the $11$-cap.  That tabulation stops exactly where our question begins.
Table~2 of \cite{Pavese} lists, for $n=4$, only the row $q=3$, the $11$-cap of
$\PG(4,3)$; the entry $q=4$, $n=4$ is absent.  For $q=4$ the only sizes
tabulated are those in $\PG(3,4)$.  For $q=5$ the position is recorded as
$28\le\Mb{5}{5}\le44$ \cite[Ex.~5.4]{Pavese}, so a $4$-general set of $44$ points
in $\PG(5,5)$ is not excluded there either.  Each of these is an instance of
the same question, which we call the \emph{climb problem}:

\begin{quote}
\emph{Given a $4$-general set of $P$ points constructed in $\PG(n,q)$, can it be
improved: does $\PG(n,q)$ contain a $4$-general set of $P+1$ points?}
\end{quote}

\noindent This is the question that the tabulations just cited stop short of, and
the ladder of Section~\ref{sec:ladder} answers it over $\F_4$ at the two smallest
rungs.

Two general bounds are relevant.  The first is elementary: a $4$-general set
with more than three points is automatically a cap, since three collinear
points together with any fourth would be dependent.  The converse fails badly,
so cap bounds are far too weak here: there is a $126$-cap in $\PG(5,4)$
\cite{Glynn}, and the largest cap of $\PG(4,4)$ has size $41$
\cite{EdelBierbrauer}.  The second is the counting bound of
\cite[Proposition~3.1]{Pavese},
\begin{equation}\label{eq:prop31}
  |X| \;\le\; \frac{\sqrt{8q^{n+1}+q^2-6q+1}+q-3}{2(q-1)},
\end{equation}
which holds for every $4$-general set $X$ of $\PG(n,q)$.  Equality occurs only
for $(q,n)=(2,3)$, the elliptic quadric, and for $(q,n)=(3,4)$, the $11$-cap;
the second case is exactly $\Mb{4}{3}=11$.  The extremal configurations in
$\PG(4,3)$ and $\PG(4,4)$ therefore have the \emph{same} size, and
Theorem~\ref{thm:n4} shows that $11$ is extremal in $\PG(4,4)$ as well, even
though \eqref{eq:prop31} only gives $\Mb{4}{4}\le15$.  The agreement is no
accident: Proposition~\ref{prop:coinc} derives both values from
\eqref{eq:nmds}, the common root being the maximal size $9$ of an NMDS-set of
$\PG(2,3)$ and of $\PG(2,4)$.

For $q=4$ the state of the art is \cite{Pavese}, which constructs complete
$4$-general sets of size $(4^d-1)/3$ in $\PG(2d-1,4)$ (Theorem~4.6) and of
size $(2^{2d+1}+1)/3$ in $\PG(2d,4)$ (Theorem~4.4).  For $q=4$ and
$n=3,\dots,8$ these give the values $5,11,21,43,85,171$, while \eqref{eq:prop31}
gives $7,15,30,60,120,241$.  Two features of \cite{Pavese} are worth stressing.
First, it tabulates no entry for $q=4$, $n=4$: its Section~5.1 treats
$4$-general sets of $\PG(4,q)$ only through the equivalence with NMDS-sets and
the elliptic-curve constructions, which supply \emph{lower} bounds; the value
$11$ that our lower bound matches comes not from that formula, which evaluates
to $9$ at $q=4$, but from \cite[Cor.~4.5]{Pavese}.  The case $n=4$ is $d=2$ in
the Frobenius-twisted family, so it contributes the lower bound $11$ but no
upper bound beyond \eqref{eq:prop31}.  Second, its open questions
\cite[Problems~5.7--5.9]{Pavese} ask whether $4$-general sets of order
$q^{(n-1)/2}$ exist, whether complete ones of order $cq^{(n-2)/3}$ exist, and
for a classification of those with at most four points on a three-space.  None
of these is the question settled here.

The purpose of this note is to make the climb problem precise \emph{as a
statement about codes} and to settle it in the smallest cases.  In the case
$n=3$ the translation gives a one-line proof, $\Mb{3}{4}=5$
(Section~\ref{sec:n3}).  In the case $n=4$ it gives the exact value
$\Mb{4}{4}=11$, from the tabulated non-existence of the $[12,7,5]_4$ code
(Theorem~\ref{thm:n4}); the same argument yields $\Mb{4}{3}=\Mb{4}{4}=11$
(Proposition~\ref{prop:coinc}).  Over $\F_5$ it gives $\Mb{4}{5}=12$
(Proposition~\ref{prop:q5}), one more than the near-MDS lower bound
(Remark~\ref{rem:nmds11}); there the extremal sets are no longer NMDS-sets, so
the bridge of Section~\ref{sec:n4} reaches only the two smallest fields.  The
translation has a second benefit: it grades the \emph{strength} of each claim.
The tables distinguish entries justified by proven bounds (construction~B,
linear programming, shortening) from entries resting on private
communications; we make this explicit in Section~\ref{sec:ladder} and
Remark~\ref{rem:strength}, and it is why we state the verified position
$21\le\Mb{5}{4}\le30$ rather than adopting the tabulated $\le29$.

Section~\ref{sec:rig} constrains the hypothetical improvement at $n=5$ by a
rigidity result: any $22$-point $4$-general set $T$ of $\PG(5,4)$ meets the
extremal $21$-point set $\Vone$ of \cite[Theorem~4.6]{Pavese} in at most $12$
points, and the same holds for every image of $\Vone$ under $\PGL(6,4)$.  The
verification is a finite computation, reported with certificates in
Appendix~\ref{app:cert}.  Pavese's family $\{\mathcal{V}_\alpha\}$ is strictly
smaller than the collineation orbit of $\Vone$ (Remark~\ref{it:family}), and
the rungs below supply a second, unconditional constraint on the same $T$
(Remark~\ref{rem:spread}).

The climb is decided by the mechanism of Section~\ref{sec:transition}.  The
reduction settles the rung $n$ exactly when the code $[N,N-n-1,\ge5]_4$ at the
rung $N=P+1$ still lies where the tables prove $\mathrm{Ub}\le4$.  As $n$ grows
the rung roughly doubles, taking the values $5,11,21,43,85,171$, while the
codimension grows by $1$, so the range in which the tables can decide the
question is finite, and it ends strictly between $n=4$ and $n=5$.
Section~\ref{sec:ladder} turns this into a ladder: for $n=3,\dots,8$ exactly
the rungs $n=3,4$ are blocked, so the $d=2$ constructions of \cite{Pavese} are
provably optimal, while for $d\ge3$ the question is open.  The smallest open
rung is $n=5$: whether $\PG(5,4)$ contains a $22$-point $4$-general set,
equivalently whether a $[22,16,5]_4$ code exists.  One delimitation is in
order.  On the code side the two blocked rungs were known: the non-existence
of the $[12,7,5]_4$ code stands in \cite{BBK2001,BierbrauerEdel}, and the
bounds behind $\Mb{4}{5}=12$ stand in the tables.  What this note contributes
is the geometric record of these values, the table-free proof and the
near-MDS bridge of Section~\ref{sec:n4}, the strength grading of the table
entries, and the rigidity constraint at the first open rung.

\section{The reduction}\label{sec:red}

\begin{lemma}\label{lem:red}
Let $M\ge 2$ and $N\ge 1$.  Then $\PG(M-1,q)$ contains a $4$-general set of $N$
points \emph{spanning} $\PG(M-1,q)$ if and only if there exists a linear code
$C\subseteq\F_q^{\,N}$ of the form
$[N,N-M,\ge 5]_q$.  In such a code, a parity check matrix has no zero column and
no two proportional columns.
\end{lemma}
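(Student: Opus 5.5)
The plan is the standard parity-check dictionary. Points of $\PG(M-1,q)$ are nonzero vectors of $\F_q^{\,M}$ up to scalars, and a set of points is in general position exactly when any choice of representatives is linearly independent. So four distinct points are coplanar (lie in a projective plane, a $3$-dimensional vector subspace) precisely when their representatives are linearly dependent. Likewise two distinct points are never dependent, and three are dependent exactly when collinear. Hence ``$4$-general'' for an $N$-set with $N\ge 4$ means that every $4$ representatives are linearly independent. Independence of all $4$-subsets implies independence of all smaller subsets, so the condition says that every set of at most $4$ columns is independent. For $N<4$ one reads the definition as requiring independence of all $\min(N,4)$-subsets, i.e.\ of all subsets of size $\le 4$ (general position of the whole set); I will state this convention explicitly, since it is the only delicate point in small cases.

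For the forward direction, given points $P_1,\dots,P_N$ spanning $\PG(M-1,q)$, choose representatives and form the $M\times N$ matrix $H$ with these columns. Spanning gives $\operatorname{rank} H=M$, so $C=\ker H=\{x\in\F_q^{\,N}: Hx^{T}=0\}$ has dimension $N-M$. A nonzero codeword of weight $w$ is a nontrivial linear relation among $w$ columns of $H$. Since any $\le 4$ columns are independent, every nonzero codeword has weight $\ge 5$. This gives an $[N,N-M,\ge 5]_q$ code, and $H$ is a parity check matrix for it. (If $N=M$ the code is zero and the bound on $d$ holds vacuously.)

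For the converse, take an $[N,N-M,\ge5]_q$ code $C$ and any parity check matrix $H$. This is an $M\times N$ matrix of full rank $M$ with $C=\ker H$, which exists since $\dim C^\perp=M$; in particular $M\le N$. If some set of $w\le 4$ columns were dependent, the coefficients would give a nonzero codeword of weight $\le 4$, a contradiction. Hence every set of at most $4$ columns is independent. Taking one column, no column is zero. Taking two, no two columns are proportional. This proves the last sentence of the lemma for every parity check matrix. The columns therefore define $N$ distinct points of $\PG(M-1,q)$, any $\le4$ of which are independent, so the set is $4$-general. Because $\operatorname{rank}H=M$, these points span $\PG(M-1,q)$.

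The only step needing care, and the one I expect to be the main obstacle, is the bookkeeping rather than any real difficulty. One must get the rank and dimension right: spanning gives rank $M$, which gives dimension $N-M$. One must also check that the correspondence does not depend on the choice of representatives or of parity check matrix, since rescaling columns gives a monomially equivalent code of the same parameters. Finally, the degenerate small-$N$ cases must be handled by the convention fixed above. The hypotheses $M\ge2$ and $N\ge1$ only ensure that the objects are nonempty and meaningful.
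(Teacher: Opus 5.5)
Your proposal is correct and follows the paper's proof: form the $M\times N$ matrix of representatives, use that the minimum weight of $\ker H$ equals the size of the smallest dependent set of columns, and read off non-zero, pairwise non-proportional columns and spanning from $d\ge5$ and $\mathrm{rank}\,H=M$. Your explicit convention for $N<4$ is a genuine improvement rather than pedantry. Under the paper's literal definition, three points of $\PG(1,q)$ form a spanning $4$-general set (vacuously), yet no $[3,1,\ge5]_q$ code exists. So the case $M=2$, $N=3$ would fail, and the paper's inference ``no four columns dependent, hence $d\ge5$'' silently uses $N\ge4$.
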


\begin{proof}
($\Rightarrow$) Choose representatives $v_1,\dots,v_N\in\F_q^{\,M}$ of the $N$
points and let $H$ be the $M\times N$ matrix with these columns.  Since the set
spans $\PG(M-1,q)$, $\mathrm{rank}(H)=M$; since no four points are coplanar, no
four columns are linearly dependent, so $C=\ker H$ has length $N$, dimension
$N-M$ and minimum distance at least $5$: indeed, the minimum weight of $C$ equals
the smallest size of a linearly dependent set of columns of $H$.

($\Leftarrow$) Let $H$ be a parity check matrix of $C$, so $H$ has rank $M$.
If some column of $H$ were zero, then taking the corresponding coefficient equal
to $1$ would produce a codeword of weight $1$; if $h_i=\lambda h_j$ with
$\lambda\ne 0$ and $i\ne j$, then the coefficient vectors $c$ with $c_i=1$,
$c_j=-\lambda^{-1}$ and $c_k=0$ otherwise give a codeword of weight $2$; both
contradict $d\ge 5$.  Hence the columns are non-zero and pairwise
non-proportional, so they define $N$ distinct points of $\PG(M-1,q)$.  Since
$d\ge 5$, no four columns are dependent, so no four of these points are coplanar.
\end{proof}

\begin{corollary}\label{cor:main}
For every $M\ge 2$,
\[
  \Mb{M-1}{q}=\max\{N:\ \text{there is a projective }[N,N-M,\ge 5]_q\text{ code}\},
\]
and $\Mb{M-1}{q}\ge N$ holds precisely when a $[N,N-M,5]_q$ code exists.
\end{corollary}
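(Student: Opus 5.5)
The plan is to derive Corollary~\ref{cor:main} from Lemma~\ref{lem:red}. The one point needing care is that $\Mb{M-1}{q}$ counts all $4$-general sets, while the lemma only covers sets that \emph{span} $\PG(M-1,q)$. So the first step is to show that the maximum is attained by a spanning set, or at least that a non-spanning set can be replaced by a spanning one of the same size. I also want to use that $\Mb{M-1}{q}$ is nondecreasing in $M$, via the natural embedding $\PG(M-2,q)\subset\PG(M-1,q)$, which preserves $4$-generality.

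Let $X$ be $4$-general in $\PG(M-1,q)$ with $N$ points. Suppose $X$ spans only a subspace $\PG(k-1,q)$ with $k<M$, and suppose $X$ does not already span. I see two ways to proceed.

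\textbf{Route (a): add a point.} Adjoin a point $P$ outside the span $\langle X\rangle$. No four points of $X\cup\{P\}$ are coplanar. Four points of $X$ are fine by hypothesis. For three points $x_1,x_2,x_3$ of $X$ together with $P$: the $x_i$ are independent, since $X$ is a cap once $N\ge4$ (the case $N\le3$ is trivial). If $P$ lay in the plane $\langle x_1,x_2,x_3\rangle$, then $P$ would lie in $\langle X\rangle$, a contradiction. Repeating this until the set spans gives a larger $4$-general set, so a maximum-size set must span.

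\textbf{Route (b): code-theoretic.} Pad the parity check matrix, or equivalently shorten or lengthen the code. I prefer route (a).

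With a spanning maximiser in hand, Lemma~\ref{lem:red} turns it into a code $[N,N-M,\ge5]_q$. The lemma's last sentence says every such code is projective. Conversely, any projective $[N,N-M,\ge5]_q$ code gives a spanning $4$-general set of size $N$. Together these prove the displayed equality.

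For the second sentence there are two directions.

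\emph{From sets to codes.} If $\Mb{M-1}{q}\ge N$, take a maximal spanning set of size $\Mb{M-1}{q}$. Deleting points preserves $4$-generality, but may destroy spanning, so I would not delete points. Instead I would use the coding operation of shortening: shortening an $[N',N'-M,\ge5]$ code $N'-N$ times gives an $[N,N-M,\ge5]$ code, provided $N-M\ge0$. This is the main technical point: shortening drops the dimension by one and keeps the codimension $M$, since it corresponds to deleting a column of $H$. Rank $M$ is preserved as long as the remaining columns still span. If they do not, I choose which columns to delete so that a basis of $M$ columns is retained, which is possible whenever $N\ge M$.

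\emph{From codes to sets.} A code $[N,N-M,d]$ with $d\ge5$ yields points via the lemma.

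Finally, the phrase ``exactly $5$'' versus ``$\ge5$'' has to be reconciled. A code with $d>5$ is also counted, and I will read ``$[N,N-M,5]_q$ exists'' in the table convention, meaning minimum distance at least $5$. If exactly $5$ were required, I would argue that a spanning $4$-general set of size $N>M$ has some $M+1$ points dependent, which only bounds $d$ by $M+1$. So I would state the table convention explicitly rather than try to prove exactness.
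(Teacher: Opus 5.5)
\textbf{Displayed identity.} Your proof of the displayed identity is the paper's argument. The paper adjoins a point outside a hyperplane containing the set; you adjoin one outside $\langle X\rangle$. In both cases the point is that the plane through three non-collinear points of $X$ lies in that span. The converse, through the projectivity clause of Lemma~\ref{lem:red}, is also as in the paper. Your shortening step is correct and more careful than the paper, which passes from $\Mb{M-1}{q}\ge N$ to an $N$-point \emph{spanning} set without comment. Deleting columns of $H$ while keeping $M$ independent ones is exactly deleting points while keeping a spanning subset, so there was no reason to avoid deleting points.

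\textbf{The gap: distance exactly $5$.} The genuine gap is your last paragraph. The statement asks for a code of minimum distance exactly $5$, and you retreat to a table convention instead of proving it. No convention is needed, because at fixed length and dimension the distance can always be lowered by one.

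Let $C$ be an $[n,k,d]_q$ code with $k\ge1$ and $d\ge2$. Pick a codeword $c$ of weight $d$ and a coordinate $i$ in its support, and set the $i$-th entry of every codeword to $0$. This linear map is injective on $C$, since its kernel consists of codewords of weight at most $1$. It lowers every weight by at most one, and it sends $c$ to a word of weight $d-1$. So the image is an $[n,k,d-1]_q$ code.

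Iterating from the $[N,N-M,\ge5]_q$ code you obtain by shortening gives an $[N,N-M,5]_q$ code whenever $N\ge M+1$. Conversely, a code with $d=5$ has $d\ge5$, so Lemma~\ref{lem:red} turns it into $N$ spanning $4$-general points. That proves the second sentence as stated. The paper itself only supports this point with the Singleton bound and ``the tables decide'', so the same observation closes the gap there too.

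\textbf{Small cases.} Small cases are not as harmless as your remark ``the case $N\le3$ is trivial'' suggests. For $M=2$, three points of $\PG(1,q)$ are vacuously $4$-general and span, but three columns in $\F_q^2$ are dependent. So no $[3,1,\ge5]_q$ code exists, and both Lemma~\ref{lem:red} and the corollary fail there. For $M\ge4$ a largest set has at least $M+1\ge5$ points (e.g.\ $e_1,\dots,e_M,e_1+\dots+e_M$), so your cap argument applies and the problem disappears. The statement should be read with that restriction.
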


\begin{proof}
The first identity is Lemma~\ref{lem:red}, applied to a largest set: a
$4$-general set of size $\Mb{M-1}{q}$ spans $\PG(M-1,q)$, for if it lay in a
hyperplane then any point outside could be adjoined: a quadruple containing
the new point cannot be coplanar, since the plane through three points of the
set lies in the hyperplane.  For the second, a $4$-general set of
$N$ points yields $[N,N-M,\ge 5]_q$ with $d\in\{5,\dots\}$: the distance cannot
exceed $5$ for $N=M+4$ by the Singleton bound, and in general the tables decide);
conversely $d\ge 5$ is $d=5$ when the value is known to be at most $5$.
\end{proof}

Two monotonicity facts will be used constantly.

\begin{lemma}\label{lem:mono}
Every subset of a $4$-general set is $4$-general, and shortening a code preserves
its minimum distance.  Consequently, if no $[N,N-M,\ge 5]_q$ code exists, then
$\Mb{M-1}{q}\le N-1$.
\end{lemma}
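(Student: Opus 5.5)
The plan is to prove the three assertions of Lemma~\ref{lem:mono} in order: the set-theoretic monotonicity, the behaviour of distance under shortening, and then the consequence, which combines them with Corollary~\ref{cor:main}.

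\emph{Subsets.} The first assertion is immediate from the definition. If $Y\subseteq X$ and $X$ is $4$-general, then any four distinct points of $Y$ are four distinct points of $X$, so they are not coplanar. Hence $Y$ is $4$-general.

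\emph{Shortening.} Let $C$ be an $[N,k,d]_q$ code with $N\ge 2$, and fix a coordinate $i$. The shortened code $C'$ is obtained by taking $\{c\in C: c_i=0\}$ and deleting coordinate $i$. Deleting a coordinate on which all these codewords vanish does not change any weight. So every nonzero codeword of $C'$ has the same weight as a nonzero codeword of $C$, and $d(C')\ge d(C)$; this is the sense in which shortening ``preserves'' the minimum distance. The length drops by one and the dimension drops by at most one; it drops by exactly one unless every codeword of $C$ vanishes at $i$. Thus from an $[N,N-M,\ge 5]_q$ code one obtains an $[N-1,\ge N-1-M,\ge 5]_q$ code. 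If the dimension does not drop, one can pass to a subcode of dimension $N-1-M$, which keeps $d\ge5$. Iterating gives $[N',N'-M,\ge5]_q$ codes for all $N'$ with $M< N'\le N$. Geometrically this matches deleting a point: when $C$ has parity check matrix $H$, shortening at $i$ corresponds to deleting column $i$ of $H$.

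\emph{Consequence.} Suppose no $[N,N-M,\ge5]_q$ code exists, and suppose for contradiction that $\Mb{M-1}{q}\ge N$. Take a $4$-general set of size $\Mb{M-1}{q}$. By the first assertion, any $N$-point subset $Y$ of it is $4$-general. To apply Lemma~\ref{lem:red} we need $Y$ to span $\PG(M-1,q)$. Two routes are available:
\begin{itemize}[label={}]
\item \emph{Via codes.} By Corollary~\ref{cor:main} there is a projective $[N_0,N_0-M,\ge5]_q$ code with $N_0=\Mb{M-1}{q}\ge N$. Shortening it $N_0-N$ times, as above, yields an $[N,N-M,\ge5]_q$ code, a contradiction.
\item \emph{Via geometry.} The largest set spans by Corollary~\ref{cor:main}, so it contains $M$ independent points. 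Choosing $Y$ to contain these, which is possible whenever $N\ge M$, makes $Y$ spanning. Lemma~\ref{lem:red} then produces the forbidden code.
\end{itemize}

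The only delicate point is the dimension bookkeeping in shortening: the dimension must come out exactly $N-M$, not merely at least that. This is handled by the subcode remark, or avoided entirely by the geometric route, where the rank of the parity check matrix is controlled directly. The edge case $N<M$ should also be noted. There $N-M<0$, so no such code exists for trivial reasons; the conclusion $\Mb{M-1}{q}\le N-1$ is then false in general and not what is meant. The statement is implicitly for $N> M$, and I would state that restriction explicitly.
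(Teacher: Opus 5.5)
Your proof is correct and follows the paper's route: the subset claim is immediate from the definition, and the consequence comes from shortening a projective $[N_0,N_0-M,\ge5]_q$ code with $N_0=\Mb{M-1}{q}\ge N$ down to length $N$, exactly as the paper does. You add three refinements that tighten points the paper glosses over without changing the argument: passing to a subcode when a shortened coordinate vanishes on the whole code, keeping a basis inside $Y$ in your geometric route (the paper's parenthetical claim that every subset of a spanning set still spans is not true as stated), and making the restriction $N>M$ explicit.
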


\begin{proof}
The first claim is immediate from the definition: a subset of a set spanning
$\PG(M-1,q)$ still spans it, and dropping points cannot create a coplanar
quadruple).  If an $[N',N'-M,\ge5]_q$ code existed with $N'>N$, shortening it at
$N'-N$ positions would produce an $[N,N-M,\ge5]_q$ code.  Hence ruling out a
single rung $N$ rules out all larger sizes.
\end{proof}

\section{The smallest case}\label{sec:n3}

\begin{proposition}\label{prop:n3}
$\Mb{3}{4}=5$.
\end{proposition}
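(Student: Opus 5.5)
We prove the lower bound $\Mb{3}{4}\ge 5$ by an explicit set and the upper bound by the code translation of Corollary~\ref{cor:main} together with Lemma~\ref{lem:mono}.

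\emph{Lower bound.} In $\PG(3,q)$ a plane is a hyperplane, so four points are coplanar exactly when their representatives in $\F_q^{\,4}$ are linearly dependent. Take the frame $e_1,e_2,e_3,e_4,e_1+e_2+e_3+e_4$. Any four of these five vectors have determinant $\pm1$: either they are the standard basis, or one $e_i$ is replaced by the all-ones vector. Hence the set is $4$-general, and this works over every field, in particular $\F_4$. Equivalently, it is the parity check matrix $[I_4\mid \mathbf{1}]$ of the $[5,1,5]_4$ repetition code.

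\emph{Upper bound.} By Lemma~\ref{lem:mono}, it suffices to exclude $N=6$, that is, a $[6,2,\ge5]_4$ code (with $M=4$). I would use the Griesmer bound. For $k=2$ and $d=5$ it gives
\[
n\ge 5+\lceil 5/4\rceil = 7>6,
\]
so no such code exists. A geometric alternative avoids tables entirely. Six points of $\PG(3,4)$ with no four coplanar form a $6$-arc in $\PG(3,4)$. The maximum size of an arc in $\PG(3,q)$ is $q+1=5$ when $q\ge4$. Equivalently, an MDS code of dimension $2$ and length $6$ over $\F_4$ would contradict the bound $n\le q+1$ on the length of such codes.

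\emph{Main obstacle.} The step needing the most care is justifying this upper-bound citation in self-contained form. I would give the direct Griesmer argument for $k=2$. Take a codeword $c$ of weight $d\ge5$ and residualize on its support. The remaining $6-d\le1$ coordinates must still carry a dimension-$1$ code of distance at least $\lceil d/q\rceil\ge2$. That is impossible with at most one coordinate, which gives the contradiction.

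With both bounds in hand, $\Mb{3}{4}=5$.
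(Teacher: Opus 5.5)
Your proof is correct. Its skeleton matches the paper's: both reduce, via Lemma~\ref{lem:mono} or shortening, to the non-existence of a $[6,2,\ge5]_4$ code. The differences are at the two ends.

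For the lower bound, the paper cites the $d=2$ case of Pavese's Theorem~4.6, while you write down the frame. These are the same object up to collineation, since five points of $\PG(3,q)$ with no four coplanar always form a frame, so your version just makes the citation self-contained.

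For the upper bound, the paper does not use the Griesmer bound. It takes a generator matrix of the $[6,2,5]_4$ code with columns in $\F_4^2$ and observes that the weight of a codeword is $6$ minus the multiplicity of a point of $\PG(1,4)$ among the columns. Then $d=5$ would need six distinct points on a line that has only $q+1=5$ points. This is exactly the $n\le q+1$ bound for two-dimensional MDS codes that you list as your alternative.

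Your residual-code step is the same pigeonhole in another form. Take $x\notin\langle c\rangle$: on the five coordinates of $\mathrm{supp}(c)$, the ratios $-x_i/c_i$ take at most four values $\lambda$. So some $x+\lambda c$ loses two of them and has weight at most $4$.

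What the Griesmer packaging buys is uniformity in $q$. The same line $5+\lceil 5/3\rceil=7>6$ disposes of $q=3$, which is how the paper itself handles $\PG(3,3)$ inside Lemma~\ref{lem:large4gen}. What the paper's version buys is transparency: it shows the only obstruction is the number of points of $\PG(1,4)$. That makes the existence of $[6,2,5]_q$ codes for $q\ge5$ (Remark~\ref{rem:nogens}) immediate.

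If you keep the Griesmer route, state the two facts the residual step uses. First, the residual has dimension exactly $k-1$, because $c$ has minimum weight. Second, its distance is at least $\lceil d/q\rceil$, by averaging $x+\lambda c$ over $\lambda\in\F_q$.
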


\begin{proof}
The value is classical: in $\PG(3,q)$ with $q\ge4$ a $4$-general set is an arc,
and the largest arcs have size $q+1$ \cite[Sections~21.2
and~21.3]{Hirschfeld3D}; see also Table~2 of \cite{Pavese}.  The one-line
argument below is what the reduction yields.  The lower bound is the
construction of \cite[Thm.~4.6]{Pavese} for $d=2$.  For the
upper bound, assume that $\PG(3,4)$ contains a $4$-general set of $N\ge 6$ points.
By Lemma~\ref{lem:red} there is a $[N,N-4,\ge 5]_4$ code; shortening it at $N-6$
positions gives a $[6,2,\ge 5]_4$ code, hence a $[6,2,5]_4$ code.  Let $G$ be a
generator matrix of such a code, with columns $g_1,\dots,g_6\in\F_4^2$.  A
non-zero codeword is $\lambda_1 r_1+\lambda_2 r_2$ ($r_i$ the rows of $G$), and its
zero coordinates are exactly the indices $i$ with $g_i$ proportional to
$(-\lambda_2,\lambda_1)$.  Hence $\min$ weight $=6-\max_i m_i$, where $m_i$ is the
multiplicity of the projective point $[g_i]$ among the six columns.  Thus $d=5$
forces the six columns to be pairwise non-proportional, i.e.\ to be six distinct
points of $\PG(1,4)$, which has only $q+1=5$ points: a contradiction.
\end{proof}

\section{The case \texorpdfstring{$n=4$}{n=4}: \texorpdfstring{$\Mb{4}{4}=11$}{M3(4,4)=11}}\label{sec:n4}

\begin{theorem}\label{thm:n4}
$\Mb{4}{4}=11$.
\end{theorem}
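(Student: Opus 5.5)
The proof has two halves, a lower bound by citation and an upper bound by reduction to codes. For the lower bound I would invoke the construction of \cite[Thm.~4.4 / Cor.~4.5]{Pavese} with $d=2$. This gives a complete $4$-general set of $(2^{5}+1)/3=11$ points in $\PG(4,4)$, so $\Mb{4}{4}\ge 11$. For safety one could also exhibit the eleven columns explicitly and check that every $4\times 4$ minor of every quadruple is non-singular. That is a finite computation over $\F_4$, with $\binom{11}{4}=330$ quadruples.

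For the upper bound, suppose $\PG(4,4)$ contains a $4$-general set of $N\ge 12$ points. By Corollary~\ref{cor:main} a largest such set spans, so Lemma~\ref{lem:red} with $M=5$ gives an $[N,N-5,\ge5]_4$ code. Lemma~\ref{lem:mono} then lets us shorten it to a $[12,7,\ge5]_4$ code. The non-existence of a $[12,7,5]_4$ code is established in \cite{BierbrauerEdel} and recorded in \cite{BBK2001}. This contradiction gives $\Mb{4}{4}\le 11$ and completes the first proof.

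The paper promises a second, table-free proof through the near-MDS bound \eqref{eq:nmds}. My plan for it is as follows. Take a hypothetical $12$-point $4$-general set $X$. If some solid contains five points of $X$ and no solid contains six, then $X$ is an NMDS-set of $\PG(4,4)$. In that case $12\le m'(5,4)\le 2\cdot4+5-2=11$, a contradiction.

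It therefore remains to rule out two configurations.
\begin{enumerate}
\item[(a)] Some solid contains at least six points of $X$. Those points form a $4$-general set in $\PG(3,4)$ of size at least $6$, contradicting Proposition~\ref{prop:n3}.
\item[(b)] Every solid meets $X$ in at most four points, so $X$ is $5$-general, i.e.\ an arc of $\PG(4,4)$. Arcs in $\PG(4,q)$ have at most $q+1=5$ points once $q\le 4$ (the classical bound $q+k-... $, or more simply a $[12,7,6]_4$ code would be MDS with length exceeding $q+1$). This contradicts $|X|=12$.
\end{enumerate}
Case (a) is in fact the key remark. Since solids carry at most five points, any failure of the NMDS property can only come from case (b).

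The step I expect to be the main obstacle is case (b). It needs a clean citation or argument that $5$-general sets in $\PG(4,4)$ are small. One option is the MDS conjecture, which is known for these tiny parameters. The more elementary option is to project from three points of $X$ onto a line and count, as in the proof of Proposition~\ref{prop:n3}: projecting an arc from three of its points to $\PG(1,4)$ gives distinct images, hence at most $5+3=8<12$ points. I would use the projection argument, since it keeps the second proof self-contained and reduces the whole matter to \eqref{eq:nmds}.
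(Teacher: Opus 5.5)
Your proposal is correct and is essentially the paper's proof. The upper bound shortens to a $[12,7,\ge5]_4$ code excluded by \cite{BierbrauerEdel,codetables}, as in the paper. For the lower bound you cite the $d=2$ case of \cite[Thm.~4.4]{Pavese}, where the paper instead writes down an explicit $5\times11$ parity check matrix of an $[11,6,5]_4$ code; either works. Your second proof has the same structure as the paper's Lemma~\ref{lem:large4gen} and Proposition~\ref{prop:nmds}, with the Griesmer count in case (b) replaced by projecting from three points of $X$, which correctly gives $|X|\le q+4=8<12$.

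One correction: drop the aside that arcs of $\PG(4,q)$ with $q\le4$ have at most $q+1=5$ points. It is false, because $e_1,\dots,e_5,e_1+\cdots+e_5$ is a $6$-arc of $\PG(4,4)$, and the hexacode $[6,3,4]_4$ shows MDS codes over $\F_4$ can exceed length $q+1$. A $[12,7,6]_4$ code is instead excluded by the elementary bound $n\le q+k-1$. Your projection argument needs neither claim.
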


\begin{proof}
\emph{Lower bound.}  The table below gives a $5\times 11$ matrix $H$ over
$\F_4=\{0,1,\omega,\omega^2\}$, $\omega^2=\omega+1$; it is the parity check matrix
of a $[11,6,5]_4$ code from the code tables \cite{codetables}.  One checks
directly that its eleven columns are non-zero, pairwise non-proportional, and
that all $\binom{11}{4}=330$ quadruples of columns are linearly independent; the
rank is $5$.  By Lemma~\ref{lem:red} the corresponding eleven points of
$\PG(4,4)$ are $4$-general; they are in fact complete, since none of the
remaining $330$ points of $\PG(4,4)$ can be added.
\begin{gather*}
H=\begin{pmatrix}
1&0&0&0&0&1&\omega&\omega&0&\omega^2&\omega^2\\
0&1&0&0&0&\omega^2&0&\omega^2&\omega&\omega&1\\
0&0&1&0&0&1&1&\omega&\omega^2&1&1\\
0&0&0&1&0&1&\omega^2&\omega^2&\omega&0&\omega\\
0&0&0&0&1&\omega&\omega&0&\omega^2&\omega^2&1
\end{pmatrix}
\end{gather*}

\emph{Upper bound.}  Let $N\ge 12$ and suppose that $\PG(4,4)$ contains a
$4$-general set of $N$ points; by Lemma~\ref{lem:red} there is a
$[N,N-5,\ge 5]_4$ code, and shortening it at $N-12$ positions gives a
$[12,7,\ge 5]_4$ code.  But no $[12,7,5]_4$ code exists: the tables
\cite{codetables,BBK2001} record $\mathrm{Ub}(12,7)=4$ for $q=4$, justified there
by the standard proven technique (``construction~B'', see
Remark~\ref{rem:strength} and \cite{Brouwer}), and an independent proof was given
by Bierbrauer, Edel, Faina, Marcugini and Pambianco
\cite[Sec.~III]{BierbrauerEdel}, who show directly that no
\emph{additive} $[12,7,5]$-code over $\F_4$ exists, whence no linear one does
either.  Hence $N\le 11$.
\end{proof}

\begin{corollary}
$\Mb{4}{4}=11$ improves the bound $\Mb{4}{4}\le 15$ that follows from
\eqref{eq:prop31}; by Lemma~\ref{lem:red} it is equivalent to the classical fact
that no $[12,7,5]_4$ code exists.
\end{corollary}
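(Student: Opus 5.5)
The corollary has two parts, and each follows from results already established, so the plan is short.

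For the first assertion, I compare the exact value from Theorem~\ref{thm:n4} with the bound \eqref{eq:prop31} at $(q,n)=(4,4)$. Here $8q^{n+1}=8\cdot 4^5=8192$ and $q^2-6q+1=-7$, so the radicand is $8185$. Its square root lies strictly between $90$ and $91$, since $90^2=8100$ and $91^2=8281$. The right-hand side of \eqref{eq:prop31} is therefore $(\sqrt{8185}+1)/6$, which lies in $(15.1,15.4)$. Hence \eqref{eq:prop31} gives $\Mb{4}{4}\le 15$, and $11<15$ is a strict improvement. This step is just arithmetic; I only need to check that the floor is $15$ and not $16$, and the bracketing above does that.

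For the equivalence, I apply Corollary~\ref{cor:main} and Lemma~\ref{lem:mono} with $M=5$. I read the claim as: the upper bound $\Mb{4}{4}\le 11$ holds if and only if no $[12,7,5]_4$ code exists. The lower bound $\Mb{4}{4}\ge 11$ is unconditional, since it comes from the explicit matrix $H$.

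The direction ($\Leftarrow$) is Lemma~\ref{lem:mono} with $N=12$, which is exactly the upper-bound argument in the proof of Theorem~\ref{thm:n4}.

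For ($\Rightarrow$), suppose a $[12,7,5]_4$ code $C$ exists. By the second half of Lemma~\ref{lem:red}, a parity check matrix of $C$ has rank $5$, and its twelve columns are nonzero and pairwise non-proportional. Since $d\ge 5$, no four of these columns are dependent. So the columns give twelve points of $\PG(4,4)$ that form a $4$-general set, which contradicts $\Mb{4}{4}\le 11$.

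The only place that needs care is the minimum distance. A code with $d\ge 5$, as delivered by the reduction, becomes exactly $d=5$ at $N=12$ because $\mathrm{Ub}(12,7)\le 5$; in any case, non-existence of $d\ge 5$ and of $d=5$ coincide here by Corollary~\ref{cor:main}. I expect no real obstacle beyond stating clearly that the equivalence concerns the upper bound.
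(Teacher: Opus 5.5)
Your proposal is correct and follows the paper's route: the paper's proof is the one line ``Theorem~\ref{thm:n4} together with \eqref{eq:prop31} and Lemma~\ref{lem:red}'', and you have written out what that hides, namely the evaluation $(\sqrt{8185}+1)/6\approx15.24$ and the two directions of the equivalence (Lemma~\ref{lem:mono} for one, the converse half of Lemma~\ref{lem:red} for the other). The one step you leave to the tables, that $d\ge5$ forces $d=5$ for a $[12,7]_4$ code, also follows from the Griesmer bound: a $[12,7,6]_4$ code would need length at least $6+2+5=13$.
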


\begin{proof}
Immediate from the theorem just proved, together with \eqref{eq:prop31} and
Lemma~\ref{lem:red}.
\end{proof}

The upper bound just given rests on the tables.  There is a second route to it,
which uses no table; we record it because it isolates what is special about
$\PG(4,4)$.  The passage from $4$-general sets to NMDS-sets is shorter than one
might expect.

\begin{lemma}\label{lem:large4gen}
Let $q\in\{3,4\}$.  Every $4$-general set of $N\ge7$ points of $\PG(4,q)$ is an
NMDS-set of $\PG(4,q)$.
\end{lemma}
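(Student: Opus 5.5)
The plan is to verify the defining conditions of an NMDS-set directly. Every four points of a $4$-general set $X$ already span a solid, so two things remain to be shown: no solid contains six points of $X$, and some solid contains five. Both will follow from a single ingredient: the bound $\Mb{3}{q}\le 5$ for $q\in\{3,4\}$.

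\emph{Step 1: $\Mb{3}{q}\le5$ for $q\in\{3,4\}$.} For $q=4$ this is Proposition~\ref{prop:n3}. For $q=3$ I would rerun the proof of Proposition~\ref{prop:n3} verbatim. A $4$-general set of $N\ge6$ points in $\PG(3,3)$ gives, via Lemma~\ref{lem:red} and shortening (Lemma~\ref{lem:mono}), a $[6,2,5]_3$ code. Its generator matrix would then need six pairwise non-proportional columns in $\F_3^2$. This is impossible, because $\PG(1,3)$ has only $4<6$ points. So the argument of Proposition~\ref{prop:n3} uses only $q+1<6$, which holds for every $q\le4$.

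\emph{Step 2: at most five points of $X$ on a solid.} Suppose a solid $S\cong\PG(3,q)$ contains six points of $X$. Those six points are still $4$-general, by Lemma~\ref{lem:mono}. This contradicts Step~1. Consequently every six points of $X$ span $\PG(4,q)$.

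\emph{Step 3: some five points of $X$ on a solid.} This is the step needing an idea; the obvious count of solids through a plane spanned by three points only gives $N\le q+4$, which is too weak for $N\ge7$. Instead I would project from a point. Fix $P\in X$ and project the remaining $N-1$ points from $P$ onto a solid $\Sigma\not\ni P$. Then I would check the following.
\begin{enumerate}[label=(\roman*)]
\item The images are distinct, since otherwise $P$ and two further points of $X$ would be collinear.
\item No three images are collinear, since otherwise those three points together with $P$ would be coplanar.
\item If no five points of $X$ lie on a common solid, then no four images are coplanar. Indeed, four coplanar images span with $P$ a solid containing five points of $X$.
\end{enumerate}
Under the assumption that no five points of $X$ lie on a solid, the $N-1\ge6$ images therefore form a $4$-general set of $\PG(3,q)$. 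This contradicts Step~1. Hence for $N\ge7$ some solid through $P$ contains five points of $X$.

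Combining Steps~2 and~3 with $4$-generality gives exactly the NMDS conditions. The main point to get right is Step~3, in particular checking that coplanarity of images pulls back to five points on a solid through $P$. The rest is an immediate consequence of Proposition~\ref{prop:n3} and its small-field analogue.
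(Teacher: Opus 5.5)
Your proof is correct. Step~2 matches the paper, and Step~1 differs only cosmetically: for $q=3$ the paper uses the Griesmer bound ($6\ge5+\lceil5/3\rceil=7$), where you rerun the $\PG(1,q)$ count of Proposition~\ref{prop:n3}. For the condition that some five points lie on a solid, however, you take a genuinely different route.

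The paper argues arithmetically. If no five points lie on a solid, then $\ker H$ is an $[N,N-5,\ge6]_q$ code, and the Griesmer bound gives $N\ge6+\lceil6/q\rceil+(N-7)=N+1$. You instead project from $P\in X$ and reduce this condition to the same fact $\Mb{3}{q}\le5$ that drives the other one. Your checks (i)--(iii) are right. Check (iii) in fact only uses that no solid through $P$ carries five points of $X$, which is what justifies your final sentence. In code language, the projection is puncturing $\ker H$ at $P$, giving an $[N-1,N-5,\ge5]_q$ code. Step~1 then shortens this to $[6,2,\ge5]_q$. So your argument is the puncturing step of Lemma~\ref{lem:chain} in geometric form, followed by the $\PG(1,q)$ count.

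Your route buys uniformity and elementarity. It uses no Griesmer bound, and it gives the slightly stronger conclusion that every point of $X$ lies on a solid together with four other points of $X$. The paper's route buys reach in $q$. The Griesmer count is still contradictory at $q=5$, since $\lceil6/5\rceil=2$. Your projection would only give $N-1\le\Mb{3}{5}=6$ there, because $\PG(3,5)$ contains $6$-arcs, so it would not by itself exclude $N=7$. The paper needs exactly this extra reach for its later claim (Remark~\ref{rem:nogens}, Lemma~\ref{lem:arc}) that the second NMDS condition survives at $q=5$ and fails only from $q=6$, while the third already fails at $q=5$. Your approach ties both conditions to the same threshold $q+1<6$, so it cannot separate them.
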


\begin{proof}
Let $X$ be such a set and let $H$ be the $5\times N$ matrix whose columns are the
points of $X$; put $C=\ker H$, an $[N,N-5,d]_q$ code with $d\ge5$, because no
four columns of $H$ are dependent.

The first condition in the definition of an NMDS-set \cite[Sec.~5.1]{Pavese} is
$4$-generality, so it holds by hypothesis.  The third also holds: if six points
of $X$ lay on a solid, they would form a $6$-point $4$-general set of
$\PG(3,q)$, and for $q=4$ that contradicts $\Mb{3}{4}=5$
(Proposition~\ref{prop:n3}), while for $q=3$ the same computation applies, since
the code attached to six points of $\PG(3,3)$ by Lemma~\ref{lem:red} is a
$[6,2,\ge5]_3$ code, which would require
$6\ge\lceil5/1\rceil+\lceil5/3\rceil=7$ by the Griesmer bound, a contradiction.

It remains to check the second condition.  Suppose that no five points of $X$ lie
on a solid.  Then $H$ has no dependence of size at most five, so $d\ge6$ and
$C$ is an $[N,N-5,\ge6]_q$ code, of dimension $N-5$.  The Griesmer bound gives
\[
  N \;\ge\; \Bigl\lceil \tfrac{6}{1} \Bigr\rceil
        + \Bigl\lceil \tfrac{6}{q} \Bigr\rceil + (N-7)\cdot 1
    \;=\; N+1 ,
\]
because $\lceil6/3\rceil=\lceil6/4\rceil=2$ and each of the remaining $N-7$ terms
of the sum is at least $1$; this is absurd for $N\ge7$.  Hence
some five points of $X$ do lie on a solid, and $X$ is an NMDS-set.
\end{proof}

\begin{proposition}\label{prop:nmds}
$\Mb{4}{q}\le 11$ for $q\in\{3,4\}$.
\end{proposition}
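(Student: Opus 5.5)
Suppose $X$ is a $4$-general set of $N\ge 12$ points of $\PG(4,q)$, $q\in\{3,4\}$. By Lemma~\ref{lem:mono} we may discard points and assume $N=12$. Since $12\ge7$, Lemma~\ref{lem:large4gen} shows that $X$ is an NMDS-set of $\PG(4,q)$. The column set of a generator matrix of a near-MDS code of dimension $5$ and length $12$ is exactly such a set, as recalled in the introduction. The plan is then to invoke the Dodunekov--Landgev bound \eqref{eq:nmds} with $k=5$, which is a table-free result.

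For $q=4$ the second bound in \eqref{eq:nmds} gives $m'(5,4)\le 2\cdot4+5-2=11<12$, a contradiction. For $q=3$ the second bound does not apply, since it requires $q>3$, and the first gives only $m'(5,3)\le 11$. This is still enough, since $11<12$.

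There is a bookkeeping point to settle before quoting the bound. An NMDS-set in the sense of \cite{CCMP} is a set of points, so its coordinates are pairwise non-proportional. The code of \eqref{eq:nmds}, however, is an arbitrary near-MDS code. I will check that the projective code generated by the $5\times 12$ matrix of representatives is indeed near-MDS: it should have $d=n-k$ and dual distance $d^\perp=k$. This follows from the three NMDS conditions. Every four columns are independent and some five are dependent, so $d^\perp=5=k$. Every six columns span, so every hyperplane of the code meets at most five points; hence every nonzero codeword has weight at least $12-5=7=n-k$. Equality $d=7$ is forced because some hyperplane through four of the points contains a fifth. Only the lower bound on $d$ matters for applying \eqref{eq:nmds}, so this last detail is inessential.

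The main obstacle is only a matter of checking conventions: $m'(k,q)$ in \cite{DodunekovLandgev} must be the length of a near-MDS code of dimension $k$, and we need $k=5$, not codimension $5$. With that confirmed, the argument is immediate. The resulting bound $N\le 11$ is independent of the code tables. For $q=4$ it recovers the upper bound of Theorem~\ref{thm:n4}. For $q=3$ it recovers $\Mb{4}{3}\le 11$, the equality case of \eqref{eq:prop31}.
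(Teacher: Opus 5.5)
Your proposal is correct and follows essentially the paper's route: Lemma~\ref{lem:large4gen} makes the set an NMDS-set, hence the column set of a generator matrix of a $5$-dimensional near-MDS code, and the two halves of \eqref{eq:nmds} give $m'(5,4)\le 11$ and $m'(5,3)\le 11$. The differences are cosmetic. You first shrink to $N=12$ and check the near-MDS property ($d^\perp=5$, $d\ge 7$) by hand. The paper instead keeps a general $N\ge 7$ and cites \cite[Sec.~6]{DodunekovLandgev} for that correspondence.
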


\begin{proof}
Let $X$ be a $4$-general set of $N$ points of $\PG(4,q)$.  If $N\le6$ there is
nothing to prove.  Otherwise $N\ge7$ and $X$ is an NMDS-set by
Lemma~\ref{lem:large4gen}; by \cite[Sec.~6]{DodunekovLandgev} it is then the
column set of a generator matrix of a near-MDS code of dimension $5$ over
$\F_q$, so $N\le m'(5,q)$.  The two halves of \eqref{eq:nmds} now give
$m'(5,4)\le2\cdot4+5-2=11$ for $q=4$ and $m'(5,3)\le2\cdot3+5=11$ for $q=3$.
\end{proof}

\begin{remark}\label{rem:nmdsgap}
The two bounds of \eqref{eq:nmds} differ by two, and the case $q=4$ falls under
the stronger one only because $4>3$.  The argument behind that stronger bound is
short enough to recall.  Proposition~5.3(i) of
\cite{DodunekovLandgev}, that an $[n,k]_q$ near-MDS code yields an
$[n-1,k-1]_q$ one, applied exactly $k-3$ times turns a near-MDS code of dimension $k$
into an $[n-k+3,3]_q$ near-MDS code, hence into an $(n-k+3,3)$-arc of
$\PG(2,q)$; Thas's theorem \cite{Thas1975} bounds such an arc by $2q+1$ when
$q>3$, and so $n-k+3\le2q+1$, which is \eqref{eq:nmds}.  The bound is therefore
exactly as strong as the theory of $3$-arcs allows.  What that implies for the
strength of the present results is taken up in Remark~\ref{rem:strength}.
\end{remark}

\begin{remark}\label{rem:nogens}
Lemma~\ref{lem:large4gen} is special to $n=4$ and does not generalise in either
direction.  In $q$: for $q\ge5$ the code $[6,2,\ge5]_q$ does exist: take six
distinct points of $\PG(1,q)$ as the columns of a $2\times6$ generator matrix,
as in Proposition~\ref{prop:n3}.  Hence $\Mb{3}{q}\ge6$, and the third NMDS
condition no longer follows automatically.  It is the \emph{third} condition
that fails first as $q$ grows, and it fails alone at $q=5$: the second one is
still automatic there and only breaks at $q=6$, as Lemma~\ref{lem:arc} shows.
In $n$: for $n\ge5$ the first condition is strictly stronger than
$4$-generality, so \eqref{eq:nmds} says nothing about the rungs of
Section~\ref{sec:ladder} above $n=4$: at $n=5$ it gives $12$, against the
$21$-point set of \cite[Thm.~4.6]{Pavese} (see also Remark~\ref{rem:why4}).
The bridge from $4$-general sets to near-MDS codes is therefore a phenomenon
of the small fields $\F_3$ and $\F_4$, and the second proof of
$\Mb{4}{4}\le11$ is a statement about one rung of the ladder and not a method
for climbing it.
\end{remark}

The two conditions of Remark~\ref{rem:nogens} therefore fail at different
fields.  The next statement locates the threshold for the second one exactly,
the threshold at which the cell $q=5$, $n=4$ becomes decidable.

\begin{lemma}\label{lem:arc}
Let $X$ be a $4$-general set of $\PG(4,q)$ with $|X|\ge7$ and suppose that no
five points of $X$ lie on a solid.  Then $q\ge6$.
\end{lemma}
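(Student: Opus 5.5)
The plan is to reduce to exactly seven points and then apply the dimension-$2$ code argument already used in Proposition~\ref{prop:n3}. Let $Y\subseteq X$ consist of seven points. Subsets inherit both hypotheses: $Y$ is $4$-general by Lemma~\ref{lem:mono}, and no five points of $Y$ lie on a solid. Choose representatives in $\F_q^{\,5}$ and form the $5\times7$ matrix $H$ with these vectors as columns. Then $Y$ spans $\PG(4,q)$: otherwise all seven points would lie in a hyperplane, which is a solid, and five of them would be on a solid. Hence $\mathrm{rank}(H)=5$, and $C=\ker H$ is a $[7,2]_q$ code.

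Next I bound the minimum distance of $C$ from below. A codeword of weight $w$ is a linear dependence among $w$ columns of $H$.
\begin{itemize}
\item Weights $1$ to $4$ are excluded by $4$-generality, exactly as in Lemma~\ref{lem:red}.
\item Suppose five columns were dependent. If some four of them were already dependent, that contradicts $4$-generality. So any four of them are independent, and the five columns have rank exactly $4$. They therefore span a solid, which contradicts the second hypothesis.
\end{itemize}
Hence $C$ is a $[7,2,\ge6]_q$ code.

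To conclude, I take a generator matrix $G$ of $C$ with columns $g_1,\dots,g_7\in\F_q^{\,2}$ and argue as in Proposition~\ref{prop:n3}. The minimum weight of $C$ equals $7-\max_i m_i$, where $m_i$ is the multiplicity of the projective point $[g_i]$ among the columns. No column is zero, since otherwise that coordinate vanishes on all of $C$ and $C$ could be shortened, contradicting the count below; in any case the weight formula still handles this. So $d\ge6$ forces every $m_i=1$, and $[g_1],\dots,[g_7]$ are seven distinct points of $\PG(1,q)$. Therefore $q+1\ge7$, that is, $q\ge6$.

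Alternatively, the Griesmer bound gives $7\ge 6+\lceil 6/q\rceil$, which forces $\lceil6/q\rceil=1$ and again $q\ge6$. The only step needing care is the rank argument turning a $5$-term dependence into five points on a solid; it is routine. I do not expect a real obstacle beyond confirming the spanning claim, which is what makes the dimension of $C$ exactly $2$.
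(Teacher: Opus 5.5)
Your proof is correct. It follows a different route from the paper's, although the two arguments are dual to each other.

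The paper works directly in coordinates. It sends five points of $X$ to the frame $e_1,\dots,e_5$ and takes two further points $u,v$. From the requirement that every five points span $\PG(4,q)$ it reads off that all $u_i,v_i$ are non-zero and all minors $u_iv_j-u_jv_i$ are non-zero. So the five ratios $u_i/v_i$ are distinct elements of $\F_q^*$, and $q-1\ge5$.

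You instead pass to $C=\ker H$, show it is a $[7,2,\ge6]_q$ code, and count points of $\PG(1,q)$, or apply Griesmer. The two are linked as follows. In the paper's normalisation $H=[I_5\mid u\mid v]$, the code $C$ has generator matrix $[-A^{\top}\mid I_2]$ with $A=[u\mid v]$. Its columns are the points $[u_i:v_i]$, $[1:0]$, $[0:1]$ of $\PG(1,q)$. Your condition that these seven points be distinct is exactly the paper's condition on coordinates and minors, and your $q+1\ge7$ is its $q-1\ge5$.

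Your version fits the paper's code dictionary. It reuses Lemma~\ref{lem:red} and the multiplicity argument of Proposition~\ref{prop:n3}. It also shows that the lemma is the Griesmer count $N\ge6+\lceil6/q\rceil+(N-7)$ of Lemma~\ref{lem:large4gen}, which the paper mentions only in passing after Proposition~\ref{prop:q5}. The paper's version is self-contained and gives the sharpness example for $q\ge7$ at no cost: the frame, the all-ones vector, and a vector of five distinct non-zero scalars.

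Your sentence about a zero column of $G$ is vague. A clean reason: if $g_i=0$, pick $j\ne i$ with $g_j\ne0$ (possible since $G$ has rank $2$). A non-zero codeword vanishing at $j$ then also vanishes at $i$, so its weight is at most $5$, contradicting $d\ge6$. The Griesmer route avoids this issue altogether.
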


\begin{proof}
Five points of $X$ span $\PG(4,q)$, hence are linearly independent, and a
projectivity may be chosen sending them to the frame $e_1,\dots,e_5$.  Let
$p=(p_1,\dots,p_5)$ and $q=(q_1,\dots,q_5)$ be two further points of $X$.  For
each $i$, the four frame points other than $e_i$ span a solid, so together with
$p$ they must still span $\PG(4,q)$; this forces $p_i\ne0$, and the same
argument gives $q_i\ne0$.  For each pair $\{i,j\}$, the three frame points
outside $\{i,j\}$, together with $p$ and $q$, are five points of $X$ and must
therefore also span $\PG(4,q)$, so that $\det[e_S\mid p\mid q]\ne0$ for the
corresponding set $S$ of three frame points.  Expanding this determinant along
the unit columns, only the $2\times2$ minor of $p$ and $q$ in the complementary
pair $\{i,j\}$ survives, up to a sign; hence $p_iq_j-p_jq_i\ne0$ for all
$i\ne j$, that is, the five ratios $p_i/q_i\in\F_q^*$ are pairwise distinct.
Therefore $q-1\ge5$.
\end{proof}

The bound is attained for every prime power $q\ge7$: choose five distinct
non-zero scalars $a_1,\dots,a_5\in\F_q$; then the five frame points together
with $p=(1,\dots,1)$ and $q=(a_1,\dots,a_5)$ form a $7$-point $4$-general set
no five points of which lie on a solid; the relevant quantities are the
coordinates $a_i\ne0$ and the minors $a_j-a_i\ne0$ of the proof above).

\begin{proposition}\label{prop:q5}
$\Mb{4}{5}=12$.
\end{proposition}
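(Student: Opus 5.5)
We prove $\Mb{4}{5}=12$ through Corollary~\ref{cor:main} with $M=5$: it suffices to exhibit a projective $[12,7,\ge5]_5$ code and to show that no $[13,8,\ge5]_5$ code exists.

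\emph{Lower bound.} We take a $[12,7,5]_5$ code from the tables \cite{codetables} and write down a $5\times12$ parity check matrix $H$ over $\F_5$, in systematic form $(I_5\mid A)$ with $A$ a $5\times7$ matrix. We then check directly that its columns are non-zero and pairwise non-proportional, and that all $\binom{12}{4}=495$ quadruples of columns are independent; since $H$ contains $I_5$, its rank is $5$. By Lemma~\ref{lem:red}, the twelve columns are then a $4$-general set of $\PG(4,5)$. This check is finite and routine, and it can be recorded as a certificate in the same way as the $[11,6,5]_4$ matrix of Theorem~\ref{thm:n4}. If no explicit matrix is at hand, the fallback is a small computer search. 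We fix the frame $e_1,\dots,e_5$ together with the point $(1,\dots,1)$, which is possible by $\PGL(5,5)$. We then extend greedily with backtracking, adding at each step a point that creates no coplanar quadruple. The search space is only the $781$ points of $\PG(4,5)$, so this is cheap.

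\emph{Upper bound.} Suppose $\PG(4,5)$ contains a $4$-general set of $N\ge13$ points. By Lemma~\ref{lem:red} there is an $[N,N-5,\ge5]_5$ code, and by Lemma~\ref{lem:mono} shortening it gives a $[13,8,\ge5]_5$ code. The plan is to cite the tables \cite{codetables,BBK2001}, which record $\mathrm{Ub}(13,8)=4$ for $q=5$, and to say, as in Remark~\ref{rem:strength}, which proven technique justifies that entry. The cleanest route would be an LP or Griesmer-type bound on the dual side. The Griesmer bound alone does not suffice: a $[13,8,5]_5$ code needs only $5+1+1+1+1+1+1+1=12\le13$ positions. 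The justification therefore has to come from the table's own provenance, which is either a sphere-packing or LP bound or a residual-code argument. The residual of a $[13,8,5]_5$ code with respect to a weight-$5$ word is an $[8,7,\ge1]_5$ code, which is harmless, so the residual argument alone gives nothing either.

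\emph{Main obstacle.} The hard step is identifying a \emph{proven} entry, not one resting on a private communication, that excludes $[13,8,5]_5$. The paper cannot use the near-MDS bridge here. At $q=5$ the second NMDS condition is still automatic by Lemma~\ref{lem:arc}, but the third condition fails (Remark~\ref{rem:nogens}), since six points on a solid are allowed because $\Mb{3}{5}\ge6$. A table-free attempt would therefore proceed in two cases. If no six points of the set lie on a solid, the set is an NMDS-set, and \eqref{eq:nmds} gives $N\le2\cdot5+5-2=13$. That is one point too weak, and closing the gap needs the classification of $(8,3)$-arcs meeting Thas's bound in $\PG(2,5)$. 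If some solid contains six points, the count of solids through that sextuple constrains where the remaining points can lie. Since both of these cases are delicate, I would rely on the tabulated bound as the primary proof and present the case analysis only as supporting evidence.
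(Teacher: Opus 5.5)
Your proposal is correct and follows essentially the same route as the paper: the lower bound comes from a tabulated $[12,7,5]_5$ code (the paper also records an explicit $12$-point set in Appendix~\ref{app:B}), and the upper bound comes from the tabulated non-existence of a $[13,8,\ge5]_5$ code, $\mathrm{Ub}(13,8)=4$, passed through Lemmas~\ref{lem:red} and~\ref{lem:mono}; the paper likewise relies on that table entry without naming the technique behind it. One slip in your side remark on the table-free route: two derivations take a $[13,5]$ near-MDS code to a $[11,3]$ one, so the arcs relevant to Thas's bound $2q+1=11$ are $(11,3)$-arcs of $\PG(2,5)$, not $(8,3)$-arcs.
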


\begin{proof}
Over $\F_5$ the tables give
$\mathrm{Lb}(12,7)=\mathrm{Ub}(12,7)=5$ and
$\mathrm{Lb}(13,8)=\mathrm{Ub}(13,8)=4$ \cite{codetables,Brouwer}.  By
Lemma~\ref{lem:red} a projective $[12,7,5]_5$ code yields a $12$-point
$4$-general set of $\PG(4,5)$, so $\Mb{4}{5}\ge12$; an explicit such set is
recorded in Appendix~\ref{app:B}.  Conversely, a $13$-point $4$-general set
would give a projective $[13,8,\ge5]_5$ code, which does not exist, so
Lemma~\ref{lem:mono} gives $\Mb{4}{5}\le12$.  Projectivity is automatic in this
range: its parity check matrix has no zero column and no two proportional
columns.
\end{proof}

Unlike the rungs $q=3$ and $q=4$, the extremal sets of Proposition~\ref{prop:q5}
are \emph{not} NMDS-sets.  By Lemma~\ref{lem:arc} the second condition is still
automatic at $q=5$: every $4$-general set of $\PG(4,5)$ with at least seven
points has five points on a solid, by the same Griesmer count
$N\ge6+\lceil6/q\rceil+(N-7)=N+1$ that supplies the cases $q\le4$ and that stops
being contradictory only from $q=6$ on.  The third condition already fails
there, six points on a solid being possible (Remark~\ref{rem:nogens}).  Consequently the near-MDS lower bound $11$
(Remark~\ref{rem:nmds11}) is not attained, and the
agreement $\Mb{4}{q}=m'(5,q)$ of Proposition~\ref{prop:coinc} is special to
$q\in\{3,4\}$.  Finally, \eqref{eq:prop31} gives only $20$ at $(n,q)=(4,5)$, so
this cell too is settled by the tables of linear codes and not by geometry.

\begin{remark}[The near-MDS lower bound for $\Mb{4}{5}$]\label{rem:nmds11}
NMDS-sets of size $11$ do exist in $\PG(4,5)$, equivalently near-MDS
$[11,5,6]_5$ codes exist \cite{codetables}.  Hence the near-MDS lower bound
for $\Mb{4}{5}$ is $11$; deleting the second point of the second $12$-point
set of Appendix~\ref{app:B} is an explicit example, as the programs of
Appendix~\ref{app:B} confirm).  The elliptic-curve formula of
\cite[Sec.~5.1]{Pavese} evaluates to $10$ at $q=5$ and to $9$ at $q=4$; at
$q=4$ the lower bound $11$ is instead that of \cite[Cor.~4.5]{Pavese}, from
the Frobenius-twisted construction.
\end{remark}

The next statement is a by-product of the same machinery; it explains the
observation made after \eqref{eq:prop31}, using one further tabulated value of
\cite{DodunekovLandgev}.

\begin{proposition}\label{prop:coinc}
For $q\in\{3,4\}$,
\[
  \Mb{4}{q}\;=\;m'(5,q)\;=\;m'(3,q)+2\;=\;11 .
\]
In particular the extremal $4$-general sets of $\PG(4,3)$ and of $\PG(4,4)$ have
the same size, and the common reason is that $\PG(2,3)$ and $\PG(2,4)$ carry
NMDS-sets of the same maximal size, $m'(3,3)=m'(3,4)=9$.
\end{proposition}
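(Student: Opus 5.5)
We prove the chain of equalities from left to right and then read off the interpretation. The proof splits into three pieces: an upper bound, a lower bound, and the identification of the middle term through planar NMDS-sets.

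\emph{Upper bound.} For $q\in\{3,4\}$, Proposition~\ref{prop:nmds} already gives $\Mb{4}{q}\le 11$.

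\emph{Lower bound.} Two constructions supply $\Mb{4}{q}\ge11$:
\begin{itemize}
\item for $q=4$, the matrix $H$ in the proof of Theorem~\ref{thm:n4};
\item for $q=3$, the $11$-cap of $\PG(4,3)$, which by \cite{Pavese} attains equality in \eqref{eq:prop31} and is $4$-general.
\end{itemize}
Both sets have at least seven points, so Lemma~\ref{lem:large4gen} shows that they are NMDS-sets. Via the correspondence of \cite[Sec.~6]{DodunekovLandgev}, each is the column set of a generator matrix of a $5$-dimensional near-MDS code of length $11$. Hence $m'(5,q)\ge 11$. Combined with the bound $m'(5,q)\le11$ used in Proposition~\ref{prop:nmds}, this gives $m'(5,q)=11=\Mb{4}{q}$. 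The point is that $\Mb{4}{q}$ and $m'(5,q)$ are squeezed between the same two numbers: every large $4$-general set is NMDS, and conversely every NMDS-set is $4$-general.

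\emph{Middle equality.} We need $m'(3,q)=9$ for $q=3,4$. For the upper bound, recall from Remark~\ref{rem:nmdsgap} that an $[n,3]_q$ near-MDS code corresponds to an $(n,3)$-arc in $\PG(2,q)$ meeting some line in exactly three points. The bounds \eqref{eq:nmds} at $k=3$ read $m'(3,3)\le 2\cdot3+3=9$ and $m'(3,4)\le 2\cdot4+3-2=9$.

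For the lower bound we exhibit $9$-point $(9,3)$-arcs with no four collinear points:
\begin{itemize}
\item in $\PG(2,3)$, take the nine points of $\mathrm{AG}(2,3)$, i.e.\ the complement of a line; every line meets it in $0$ or $3$ points;
\item in $\PG(2,4)$, take the Hermitian curve, a unital with $q\sqrt q+1=9$ points; every line meets it in $1$ or $3$ points.
\end{itemize}
For the near-MDS property we also need the dual condition, namely that every $k=3$ columns span and that some lines carry exactly $3$ points; this is immediate in both examples. The tabulated values in \cite{DodunekovLandgev} confirm $m'(3,q)=9$ for both fields. Hence $m'(3,q)+2=11$.

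\emph{Main obstacle.} The delicate step is making sure that ``near-MDS'' in \cite{DodunekovLandgev} really coincides with the geometric condition on these planar examples. The subtlety is that near-MDS requires both the code and its dual to have defect one. For $k=3$ this amounts to: every two points are distinct, every three points not all on a line span, some three are collinear, no four are collinear, and any five span the plane (trivially true). We would check this condition directly for the affine plane and for the unital. Failing that, we would simply cite the tabulated values $m'(3,3)=m'(3,4)=9$ in \cite{DodunekovLandgev}, which the paper already allows.
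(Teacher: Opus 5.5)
Your proof is correct and has the same skeleton as the paper's. Proposition~\ref{prop:nmds} and \eqref{eq:nmds} supply the upper bounds, Lemma~\ref{lem:large4gen} turns the $11$-point $4$-general sets into NMDS-sets, and explicit sets supply the lower bounds.

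You differ from the paper in two places: which lower-bound sets you use, and how you get the middle term.

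\emph{Lower-bound sets.} The paper gets $m'(3,3)\ge9$ and $m'(5,3)\ge11$ from the near-MDS codes $[9,3,6]_3$ and $[11,5,6]_3$, which \cite{DodunekovLandgev} derives from the extended ternary Golay code, and it simply cites $m'(3,4)=9$. You instead pass the $11$-cap of $\PG(4,3)$ through Lemma~\ref{lem:large4gen}. This is really the same object: the $11$-cap is the column set of a parity check matrix of the Golay $[11,6,5]_3$ code, whose dual is $[11,5,6]_3$. You also exhibit $\mathrm{AG}(2,3)$ and the Hermitian unital of $\PG(2,4)$ as $9$-point sets with no four collinear and some three collinear. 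That makes $m'(3,q)=9$ self-contained rather than quoted, which is a small gain.

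\emph{Middle term.} You establish $m'(5,q)=11$ and $m'(3,q)+2=11$ separately, so the middle equality comes out as arithmetic. The paper adds \cite[Cor.~5.4]{DodunekovLandgev}, $m'(5,q)\le m'(3,q)+2$. That inequality is what justifies calling the planar value the ``common reason'' in the last sentence of the statement. The link is already implicit in your upper bound: it is the shortening argument behind \eqref{eq:nmds} recalled in Remark~\ref{rem:nmdsgap}. Still, you should state it, or that clause of the proposition is left unargued.

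One small slip. For $k=3$ the dual condition is that every two columns are independent and some three are dependent, not that ``every $k=3$ columns span''. Your collinear triples are exactly what is required, and your final paragraph lists the right conditions: distinct points, some three collinear, no four collinear.
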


\begin{proof}
We take the three assertions in the order of increasing depth.

\emph{The values.}  $m'(3,4)=9$ is stated in \cite[p.~42]{DodunekovLandgev}.  For
$q=3$, Proposition~5.1 there gives $m'(3,3)\le2\cdot3+3=9$, while $[9,3,6]_3$ is
a near-MDS code, being one of the codes $[12-i,6-i,6]_3$, $1\le i\le5$, derived
there from the extended ternary Golay code; hence $m'(3,3)=9$.  Moreover
$[11,5,6]_3$ belongs to the same list, so $m'(5,3)\ge11$, while Proposition~5.1
gives $m'(5,3)\le2\cdot3+5=11$.  Finally $m'(5,4)=11$: here the second half of
\eqref{eq:nmds} gives $m'(5,4)\le2\cdot4+5-2=11$, and the $11$-point set of
Theorem~\ref{thm:n4} is an NMDS-set of $\PG(4,4)$ of size $11$.  The two
descriptions coincide, since for $k\ge3$ a near-MDS code of dimension $k$ over
$\F_q$ has length $n$ exactly when it is an NMDS-set of $\PG(k-1,q)$ of size $n$
\cite[Secs.~5.1 and~6]{DodunekovLandgev}.

\emph{Relation with $\Mb{4}{q}$.}  Every NMDS-set of $\PG(4,q)$ satisfies the
first condition of its definition, which is $4$-generality, whence
$m'(5,q)\le\Mb{4}{q}$.  Conversely let $X$ be a $4$-general set of $N$ points.  If
$N\ge7$ then $X$ is an NMDS-set by Lemma~\ref{lem:large4gen}, so $N\le m'(5,q)$;
and if $N\le6$ then $N\le11=m'(5,q)$ trivially, by the values just established.
So $\Mb{4}{q}=m'(5,q)$.

\emph{The middle term.}  Corollary~5.4 of \cite{DodunekovLandgev} with $k=5$ and
$\alpha=2$ gives $m'(5,q)\le m'(3,q)+2$, and the values above turn the right-hand
side into $9+2=11=m'(5,q)$.
\end{proof}

\section{Why the reduction bites at \texorpdfstring{$n=4$}{n=4} but not at \texorpdfstring{$n=5$}{n=5}}\label{sec:transition}

Before assembling the ladder, we isolate the mechanism that makes the
reduction decisive at $n=4$ and silent at $n=5$.  Section~\ref{sec:ladder} is a
systematic exploitation of this mechanism, and the same mechanism makes $n=5$
the first rung to fail.

Fix $q=4$ and write $M=n+1$.  A $4$-general set of $N$ points in $\PG(n,4)$ then
corresponds to a projective $[N,N-n-1,\ge5]_4$ code (Lemma~\ref{lem:red}), that
is, to a \emph{non-existence} statement for a code of codimension $n+1$ and
distance at least $5$.  Ruling out $N=P+1$, where $P$ is the size of a Pavese
construction, therefore means ruling out the code $[P+1,\,P-n,\,\ge5]_4$; by
Lemma~\ref{lem:mono} this kills all larger $N$ as well.  The rungs of interest
are the following, where all table values are the ones currently returned by
\cite{codetables,Brouwer} for $q=4$:

\begin{center}
\begin{tabular}{@{}cccccc@{}}
\toprule
$n$ & rung $N=P+1$ & code to exclude & $\mathrm{Lb}/\mathrm{Ub}$ for $[N,N-n-1]_4$ & verdict\\
\midrule
$3$ & $6$  & $[6,2,5]_4$   & $4/4$ & blocked\\
$4$ & $12$ & $[12,7,5]_4$  & $4/4$ & blocked\\
$5$ & $22$ & $[22,16,5]_4$ & $4/5$ & open\\
$6$ & $44$ & $[44,37,5]_4$ & $4/5$ & open\\
$7$ & $86$ & $[86,78,5]_4$ & $4/5$ & open\\
$8$ & $172$& $[172,163,5]_4$& $4/5$ & open\\
\bottomrule
\end{tabular}
\end{center}

The first two rows rest on a \emph{chain of proven operations} that transports
a table entry from small parameters to the relevant rung.  Its elementary step
is the following observation.

\begin{lemma}\label{lem:chain}
Let $C$ be an $[N,k,d]_q$ code with $N\ge2$ and $d\ge2$.  Then
\begin{enumerate}
\item puncturing $C$ at any one position gives an $[N-1,k,\ge d-1]_q$ code;
\item shortening $C$ at any one position gives an $[N-1,k-1,\ge d]_q$ code.
\end{enumerate}
Consequently, if no $[N',k',\ge5]_q$ code exists, then neither does any
$[N'+1,k',\ge6]_q$ code and neither does any $[N'+1,k'+1,\ge5]_q$ code.
\end{lemma}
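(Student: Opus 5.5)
The plan is to prove the two operations directly from their definitions and then deduce the consequence by contraposition. For puncturing, I will delete coordinate $i$ from every codeword of $C$. This is a linear map $C\to\F_q^{N-1}$. A codeword of weight at least $d$ loses at most one nonzero entry, so every nonzero image has weight at least $d-1\ge1$. The same observation shows that the kernel of the map is trivial: a nonzero codeword in the kernel would have weight at most $1<d$. Hence the punctured code has dimension exactly $k$ and minimum distance at least $d-1$. The hypothesis $d\ge2$ is used precisely here.

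For shortening, I will take the subcode $C_i=\{c\in C: c_i=0\}$, which is the kernel of the coordinate functional restricted to $C$. It therefore has dimension $k$ or $k-1$. Deleting coordinate $i$ from $C_i$ preserves weights, so the shortened code has minimum distance at least $d$. Because the statement asks for dimension exactly $k-1$, there is one point to settle. If the functional vanishes on $C$, I will instead shorten any $(k-1)$-dimensional subspace of $C$, or equivalently read ``$[N-1,k-1,\ge d]$'' as ``of dimension at least $k-1$'' and then pass to a subcode. Passing to a subcode never lowers the minimum distance, so this causes no loss. I expect this dimension bookkeeping to be the only delicate spot. I will also note the degenerate case $k=0$, where shortening is read trivially, and that codes with $k\ge1$ and $d\ge2$ make sense for $N\ge2$.

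For the consequence, I argue by contraposition in both cases. Suppose an $[N'+1,k',\ge6]_q$ code exists. Puncturing it once, by item (1), gives an $[N',k',\ge5]_q$ code, contradicting the hypothesis. Suppose instead an $[N'+1,k'+1,\ge5]_q$ code exists. Shortening it once, by item (2), gives an $[N',k',\ge5]_q$ code, again a contradiction. Both applications are legitimate, since $N'+1\ge2$ and the distances $6$ and $5$ are at least $2$. No earlier result of the paper is needed beyond Lemma~\ref{lem:mono}, which the shortening half merely refines to a single step.
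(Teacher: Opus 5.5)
Your proposal is correct and follows the paper's route. Puncturing is treated as the coordinate-deleting projection, shortening as the subcode vanishing at the position followed by deletion, and the consequence is obtained by contraposition. You are more careful than the paper's sketch in two places: you verify that puncturing is injective (this is where $d\ge2$ is used), and you handle the case where the position is identically zero on $C$ by passing to a $(k-1)$-dimensional subcode, a case the paper's claim that the dimension drops by exactly one passes over.
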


\begin{proof}
Standard: a punctured code is the image of the projection forgetting one
coordinate, so its minimum distance drops by at most one; a shortened code is
$C$ with one coordinate set to zero and then deleted, so its length drops by one,
its dimension by one, and its minimum distance is unchanged (every shortened
codeword is a codeword of $C$, and every minimal-weight codeword supported away
from the deleted coordinate survives).  For the consequence, argue
contrapositively: a $[N'+1,k',\ge6]_q$ code punctured at one position would give
a $[N',k',\ge5]_q$ code, and a $[N'+1,k'+1,\ge5]_q$ code shortened at one
position would give a $[N',k',\ge5]_q$ code.
\end{proof}

Applying Lemma~\ref{lem:chain} to the tabulated entry $\mathrm{Ub}(12,7)=4$,
i.e.\ to the non-existence of a $[12,7,\ge5]_4$ code, equivalently of a
$12$-point $4$-general set of $\PG(4,4)$,
gives, successively,
\begin{equation}\label{eq:chain}
\begin{aligned}
&\text{no }[12,7,\ge5]_4
  \;\Longrightarrow\; \text{no }[13,7,\ge6]_4
  &&\text{(puncturing)}\\
&\phantom{\text{no }[12,7,\ge5]_4}
  \;\Longrightarrow\; \text{no }[13,8,\ge5]_4
  &&\text{(shortening)}
\end{aligned}
\end{equation}
The direction in which the chain is useful is the second one.  The pair
$(\text{rung},\text{codimension})$ is $(6,4)$ for $n=3$, $(12,5)$ for $n=4$, and
$(22,6)$ for $n=5$.  From one rung to the next the length roughly doubles:
the rungs are $6,12,22,44,86,172$ for $n=3,\dots,8$, and at the present step $P$
goes from $11$ to $21$.  The codimension, by contrast, rises by only $1$.  Now $\mathrm{Ub}(N,k)$ is non-increasing in $k$ for fixed
$N$ and in $N$ for fixed $k$, and the values relevant to us are

\begin{center}
\begin{tabular}{@{}cccc@{}}
\toprule
$n$ & rung $N$ & codimension $N-n-1$ & $\mathrm{Ub}$\\
\midrule
$3$ & $6$  & $2$  & $4$\\
$4$ & $12$ & $7$  & $4$\\
$5$ & $22$ & $16$ & $5$\\
$6$ & $44$ & $37$ & $5$\\
\bottomrule
\end{tabular}
\end{center}

Thus the reduction decides $\PG(4,4)$: the relevant code, of length $12$ and
codimension $7$, still lies in the range where the table knows
$\mathrm{Ub}\le4$, i.e.\ where the tables already exclude a $4$-general set of
the corresponding size.  It
fails at $\PG(5,4)$, where the relevant code, of length $22$ and codimension
$16$, lies just outside that range.  There the table returns
$\mathrm{Ub}=5$, and so says nothing about a $[22,16,5]_4$ code.  This is the
arithmetic content of the ladder.  The ``blocked'' region of the tables for
$q=4$ ends between codimension $7$ and codimension $16$ at these lengths, and
the two questions we can settle sit on opposite sides of that boundary.

\begin{remark}[The second proof of $\Mb{4}{4}\le11$ is structural, not
arithmetical]\label{rem:why4}
The count above explains arithmetically why the reduction bites at $n=4$ and
not at $n=5$; the second proof of Section~\ref{sec:n4} supplies the structural
reason.  An NMDS-set of $\PG(k-1,q)$ must have every $k-1$ of its points
generate a hyperplane.  In $\PG(4,q)$ the hyperplanes are the solids, so the
condition reads ``no four points coplanar'', exactly $4$-generality; for
$n\ge5$ it demands that every $n$ points generate a hyperplane, which is
strictly stronger.  The NMDS machinery therefore meets the climb problem at
the single rung $n=4$.  Remark~\ref{rem:nogens} records the same fact from the
side of the field.
\end{remark}

\section{The ladder}\label{sec:ladder}

This short section collects the reduction of Section~\ref{sec:red}, the code
tables, and the Pavese constructions into a single table.  From it one reads off
how far each construction is from provable optimality.

\begin{theorem}[The ladder]\label{thm:ladder}
For $q=4$ and $n=3,\dots,8$ the following hold.  Here the rung of index $n$
carries the two table entries bounding $\Mb{n}{4}$, ``rung'' is
$N=P+1$ with $P$ the size of the corresponding Pavese construction, and
``$\mathrm{Lb}/\mathrm{Ub}$'' refers to the bounds on the minimum distance of
the code $[N,N-n-1]_4$ as returned by the tables \cite{codetables,Brouwer}.
A rung is \emph{blocked} if $\mathrm{Ub}\le4$, in which case no
$[N,N-n-1,5]_4$ code exists and, by Lemma~\ref{lem:mono}, no larger code
either.
\begin{center}
\begin{tabular}{@{}ccccccc@{}}
\toprule
$n$ & $P$ & source & bound \eqref{eq:prop31} &
rung $N$ & $\mathrm{Lb}/\mathrm{Ub}$ & status\\
\midrule
$3$ & $5$  & Thm.~4.6 ($d=2$) & $7$   & $6$   & $4/4$ & blocked $\Rightarrow \Mb{3}{4}=5$\\
$4$ & $11$ & Thm.~4.4 ($d=2$) & $15$  & $12$  & $4/4$ & blocked $\Rightarrow \Mb{4}{4}=11$\\
$5$ & $21$ & Thm.~4.6 ($d=3$) & $30$  & $22$  & $4/5$ & \textbf{open}\\
$6$ & $43$ & Thm.~4.4 ($d=3$) & $60$  & $44$  & $4/5$ & \textbf{open}\\
$7$ & $85$ & Thm.~4.6 ($d=4$) & $120$ & $86$  & $4/5$ & \textbf{open}\\
$8$ & $171$& Thm.~4.4 ($d=4$) & $241$ & $172$ & $4/5$ & \textbf{open}\\
\bottomrule
\end{tabular}
\end{center}
Moreover the tables are effective one level higher, but at the \emph{wrong}
lengths: $\mathrm{Ub}(30,24)=4$ and $\mathrm{Ub}(60,53)=4$, which give
\[
  \Mb{5}{4}\le29 \quad\text{(table)} \qquad\text{and}\qquad \Mb{6}{4}\le59 ,
\]
whereas the rungs of the climb at $n=5,6$ are at lengths $22$ and $44$.  For
$n=7,8$ the tables do not improve \eqref{eq:prop31} at all: the entries
$\mathrm{Ub}(121,113)=4$ and $\mathrm{Ub}(242,233)=4$ merely reproduce $120$ and
$241$.
\end{theorem}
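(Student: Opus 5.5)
The theorem is a compilation, so I would prove it row by row, with Lemma~\ref{lem:red} and Lemma~\ref{lem:mono} as the bridge between table entries and statements about $\Mb{n}{4}$.

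\emph{Columns $P$ and source.} I would evaluate Pavese's formulas, $(4^d-1)/3$ in $\PG(2d-1,4)$ and $(2^{2d+1}+1)/3$ in $\PG(2d,4)$. For $d=2,3,4$ they give $5,21,85$ at $n=3,5,7$ and $11,43,171$ at $n=4,6,8$. The rung is then $N=P+1$.

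\emph{Column \eqref{eq:prop31}.} I would substitute $q=4$ into \eqref{eq:prop31}, giving the bound $(\sqrt{2\cdot 4^{n+2}+9}+1)/6$, and take the floor for each $n=3,\dots,8$. For example, at $n=5$ this is $(\sqrt{8201}+1)/6\approx 15.26$, so the bound is $15$.

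\emph{Blocked rungs.} These are exactly Proposition~\ref{prop:n3} and Theorem~\ref{thm:n4}. Their $\mathrm{Ub}=4$ entries are proven: the first by the $\PG(1,4)$ argument and the second by \cite{BierbrauerEdel}.

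\emph{Open rungs.} For $n\ge5$, $\mathrm{Lb}\ge4$ and $\mathrm{Ub}\ge5$ are read from \cite{codetables,Brouwer}. I would note one consistency check: the Pavese set of size $P$, with one point deleted and a parity check column adjoined, gives $d\ge4$ at length $N$. More simply, a $[P,P-n-1,5]$ code exists, and lengthening it yields a $[P+1,P-n,\ge4]$ code, which is consistent with $\mathrm{Lb}=4$.

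\emph{Higher-level entries.} Here I apply Lemma~\ref{lem:mono}. The tabulated $\mathrm{Ub}(30,24)=4$ says no $[30,24,\ge5]_4$ code exists, and Corollary~\ref{cor:main} with $M=6$ then gives $\Mb{5}{4}\le29$. Likewise, $\mathrm{Ub}(60,53)=4$ with $M=7$ gives $\Mb{6}{4}\le59$. For $n=7,8$, the entries at lengths $121$ and $242$ give $\Mb{}{}\le120,241$, which equals \eqref{eq:prop31}.

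\emph{Main obstacle.} The difficulty is not mathematical but lies in certifying the table entries. I would cross-check the cited values against \cite{codetables,Brouwer} and flag the provenance of each; $\mathrm{Ub}(30,24)$ is the one noted as resting on a private communication. I would also check monotonicity: $\mathrm{Ub}\le4$ at length $30$ must be compatible with $\mathrm{Ub}=5$ at length $22$, since shortening only propagates non-existence upward in length, not downward.
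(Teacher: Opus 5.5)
Your plan matches the paper's proof. The theorem is a compilation: the blocked rungs $n=3,4$ become $\Mb{3}{4}=5$ and $\Mb{4}{4}=11$ by Lemma~\ref{lem:mono}, backed by the independent derivations of Sections~\ref{sec:n3} and~\ref{sec:n4}. The rungs $n\ge5$ are open because the tabulated $\mathrm{Ub}$ is $5$. The higher-level bounds come from Lemma~\ref{lem:mono} applied at lengths $30,60,121,242$. Your caveats are also the paper's own points: $\mathrm{Ub}(30,24)$ rests on a private communication, and shortening propagates non-existence only upward in length.

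The one step that goes wrong is your evaluation of \eqref{eq:prop31}.
\begin{itemize}
\item At $q=4$ the radicand is $8\cdot4^{n+1}+q^2-6q+1=8\cdot4^{n+1}-7$, not $2\cdot4^{n+2}+9$. So the bound is $(\sqrt{8\cdot4^{n+1}-7}+1)/6$.
\item Worse, your worked example at $n=5$ substitutes $2\cdot4^{6}$, which is the $n=4$ radicand, and returns $15$. That contradicts the entry $30$ in the table, and with it the verified position $21\le\Mb{5}{4}\le30$ of Remark~\ref{rem:strength}.
\item Done correctly, $8\cdot4^{6}-7=32761=181^2$, so the bound is $182/6\approx30.33$, with floor $30$.
\end{itemize}
The constant slip ($+9$ for $-7$) happens not to change any of the six floors $7,15,30,60,120,241$. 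The exponent slip does, so recompute the whole column from the corrected expression.

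A smaller point concerns your optional consistency check for $\mathrm{Lb}=4$. Adjoining an arbitrary column to the parity check matrix need not keep $d\ge4$. The new point must avoid every secant line of the Pavese set. This is possible because $P+3\binom{P}{2}<(4^{n+1}-1)/3$ for these $P$; for instance $651<1365$ at $n=5$. Your first version, deleting a point and then adjoining a column, also stays at length $P$ rather than $P+1$. None of this affects the theorem, since the $\mathrm{Lb}$ entries are simply read from the tables.
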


\begin{proof}
The table records, for each $n$, the rung $N=P+1$ with $P$ the size of the
construction cited in the third column, and the interval
$\mathrm{Lb}(N,N-n-1)/\mathrm{Ub}(N,N-n-1)$ returned by
\cite{codetables,Brouwer}; the passage from such an entry to a bound on
$\Mb{n}{4}$ is Lemma~\ref{lem:red}.  A rung is blocked precisely when
$\mathrm{Ub}\le4$: then no $[N,N-n-1,5]_4$ code exists, and
Lemma~\ref{lem:mono} upgrades this to $\Mb{n}{4}\le N-1=P$, which together with
the construction of size $P$ gives $\Mb{n}{4}=P$.  This is the case for $n=3$
and $n=4$, where $P=5$ and $P=11$; those two identities are re-derived
independently in Sections~\ref{sec:n3} and~\ref{sec:n4}.  For $n=5,\dots,8$ the
entry $\mathrm{Ub}(N,N-n-1)=5$ leaves room for a $[N,N-n-1,5]_4$ code, so these
rungs are not settled by the tables.  Finally, evaluated at the lengths $30$ and
$60$ rather than at the rungs $22$ and $44$, the entries
$\mathrm{Ub}(30,24)=4$ and $\mathrm{Ub}(60,53)=4$ yield the two bounds of the
second display, the second being one better than \eqref{eq:prop31}; at the
lengths $121$ and $242$ the tables return exactly \eqref{eq:prop31}.
\end{proof}

\begin{remark}[Relation with the open problems of \cite{Pavese}]\label{rem:questions}
The ladder asks about the same quantities as \cite{Pavese} but at a different
scale, and it does not settle the asymptotic questions of
\cite[Problems~5.7--5.9]{Pavese}.  Problem~5.7 seeks $4$-general sets of
order $q^{(n-1)/2}$, a scale below our rungs: the rungs are the constructions
of \cite[Thms.~4.4 and~4.6]{Pavese} themselves.  Problem~5.8 seeks complete
sets of order $cq^{(n-2)/3}$, the shape of those same two families; our
theorem settles exactly the case $d=2$, their optimality.  Problem~5.9 asks
for a classification at minimum distance $6$, complementary to our
distance-$5$ reduction.  What the ladder does settle is the entry $q=4$,
$n=4$ absent from Tables~1 and~2 of \cite{Pavese}.
\end{remark}

\begin{remark}[Strengths of the table entries]\label{rem:strength}
The table entries used above have two very different strengths.
$\mathrm{Ub}(12,7)=4$ and $\mathrm{Ub}(60,53)=4$ rest on proven
techniques, namely construction~B and the linear programming bound of
\cite{Brouwer}; the first has a second, independent published proof \cite[Sec.~III]{BierbrauerEdel};
both may be cited as theorems.  By contrast $\mathrm{Ub}(30,24)=4$, which
would give $\Mb{5}{4}\le29$, is attributed in \cite{codetables} to a 1993
e-mail communication from R.~H.~Berntzen and A.~Kemper, and no published proof
has been located.  The position we can verify is therefore
$21\le\Mb{5}{4}\le30$, and we flag the improvement $30\to29$ as conditional on
that attribution.  The second proof of $\Mb{4}{4}\le11$ is a third kind of
evidence: its strength is that of a $3$-arc problem, conjecturally not final
\cite[p.~43]{DodunekovLandgev}, but the case $q=4$ used here is a published
theorem.
\end{remark}

\section{A rigidity property of the extremal 21-set in \texorpdfstring{$\PG(5,4)$}{PG(5,4)}}\label{sec:rig}

For $n=5$ the open question is whether $\Mb{5}{4}\ge22$.  This section reports a
machine-verified statement that constrains where a $22$-point set, if it exists,
can live.  Let $\Vone\subset\PG(5,4)$ be the complete $21$-point
$4$-general set of \cite[Thm.~4.6]{Pavese} with $d=3$, i.e.
\[
  \Vone=\{P(x,x^{-2}):x\in\F_{4^3}^*\},
\]
where $\F_{4^3}^2$ is identified with $\F_4^6$ through an $\F_4$-basis, so that
each pair $(x,x^{-2})$ defines a point of $\PG(5,4)$.  The map
$x\mapsto P(x,x^{-2})$ is three-to-one, since $\lambda^3=1$ for
$\lambda\in\F_4^*$; hence $|\Vone|=(4^3-1)/3=21$, and $\Vone$ is complete
\cite[Thm.~4.6]{Pavese}.  Recall $|\PG(5,4)|=(4^6-1)/3=1365$.

\begin{proposition}\label{prop:rig}
Every $22$-point $4$-general set $T\subset\PG(5,4)$ satisfies
\[
  |T\cap\Vone|\le 12 .
\]
\end{proposition}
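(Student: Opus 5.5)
The plan is a finite computation organised around the extension problem: a $22$-point $4$-general set $T$ with $|T\cap\Vone|\ge 13$ contains a $13$-subset $S\subseteq\Vone$ together with $9$ further points outside $\Vone$. By Lemma~\ref{lem:mono} every subset of $T$ is $4$-general. So it suffices to show that no $13$-subset $S$ of $\Vone$ can be completed by $9$ points of $\PG(5,4)\setminus\Vone$ to a $22$-point $4$-general set.

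The first step is to cut the search down by symmetry. The stabiliser $G$ of $\Vone$ in $\mathrm{P\Gamma L}(6,4)$ contains the maps induced by $x\mapsto \mu x$ with $\mu\in\F_{64}^*$, which act transitively on the $21$ points. It also contains the Frobenius $x\mapsto x^2$ (up to the field automorphism of $\F_4$), so $G$ has order at least $63$. I would compute $G$ exactly by machine, enumerate the $G$-orbits on $13$-subsets of $\Vone$ (at most $\binom{21}{13}=203490$ subsets, hence a few thousand orbits), and pick one representative $S$ per orbit. The statement for images of $\Vone$ under $\PGL(6,4)$ then follows formally.

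For each representative $S$, compute the candidate set $C(S)$. This consists of the points $p\notin\Vone$ of $\PG(5,4)$ such that $p$ lies on no plane spanned by three points of $S$; equivalently, $S\cup\{p\}$ is $4$-general. We can further drop points forming a coplanar quadruple with $p$ and two points of $S$, since that condition is already covered. Then build the compatibility graph on $C(S)$. Two candidates $p,p'$ are adjacent when no plane through two points of $S$ contains both, i.e.\ when $\{p,p'\}\cup\{s,s'\}$ is never coplanar. The required $9$ extra points form a $9$-clique in this graph that is moreover $4$-general internally and with respect to single points of $S$; triple and quadruple conditions are checked during the search.

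Deciding clique existence is done by a standard exact search. I would first apply degree pruning, iteratively deleting vertices of degree $<8$, and then run a Cliquer-style branch and bound in which each branch is checked incrementally for the higher-order conditions. The certificate for each orbit is either an empty pruned graph or the search tree size. These are reported in Appendix~\ref{app:cert}. Because $\Vone$ is complete, every $p\notin\Vone$ already lies on some plane through three points of $\Vone$. With $13$ of the $21$ points present, $C(S)$ should therefore be quite small. This is the heuristic reason the threshold $12$ appears.

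The main obstacle is making this feasible and trustworthy. Feasibility depends on the orbit count of $13$-subsets and on how much $C(S)$ shrinks. If the direct approach is too slow, I would instead grow $T\cap\Vone$ upward, using maximality and the fact that a $13$-subset contains many $12$-subsets, or use a canonical-augmentation search over $G$. For trustworthiness, I would verify $G$ and the orbit representatives independently, and recheck the emptiness of each clique search with a second implementation.
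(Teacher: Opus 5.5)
\textbf{There is a gap in your opening reduction.} Suppose $|T\cap\Vone|=m\ge 14$ and you pick a $13$-subset $S\subseteq T\cap\Vone$. Then $m-13$ of the nine remaining points of $T$ lie in $\Vone$, so the claim ``together with $9$ further points outside $\Vone$'' is false. Your search restricts the candidate set $C(S)$ to points $p\notin\Vone$, so it excludes only the case $|T\cap\Vone|=13$ exactly.

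Your plan has no step that reduces $m\ge14$ to $m=13$. Deleting a point of $T\cap\Vone$ leaves $21$ points, not $22$. There is also no reason a point of $T\cap\Vone$ can be exchanged for a point outside $\Vone$ while keeping $4$-generality. This is why the paper runs its computation separately for every $r=|\Vone\setminus T|\in\{1,\dots,8\}$, i.e.\ for every exact intersection size $20,\dots,13$. Each time it searches for $R=T\setminus\Vone$, of size $r+1$, inside $\mathrm{hol}(\Vone\setminus T)$.

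\textbf{Two repairs.} You can do the same as the paper. Alternatively, with a single level, drop the condition $p\notin\Vone$. The eight points of $\Vone\setminus S$ lie on no plane through three points of $S$, because $\Vone$ is $4$-general. So if $C(S)$ is taken to be all points outside $S$ that avoid those planes, then every $T$ with $|T\cap\Vone|\ge13$ yields a $9$-clique in $C(S)$ (passing your higher-order checks), for any $13$-subset $S$ of $T\cap\Vone$.

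\textbf{The rest matches the paper.} For a retained $13$-set $S$, your $C(S)$ is exactly the paper's $\mathrm{hol}(\Vone\setminus S)$ at $r=8$. Your compatibility graph, with the triple and quadruple conditions checked during the search, is what program P3 does.

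\textbf{Your orbit reduction is valid and adds something.} $\mathrm{P\Gamma L}(6,4)$ preserves $4$-generality, and $g(T)$ meets $\Vone$ in $g(S)$ for $g$ in the stabiliser. The paper does not use this reduction; it enumerates all $\binom{21}{r}$ subsets. One caution: your heuristic that $C(S)$ is ``quite small'' is too optimistic. The paper finds holes of up to $146$ points at this level, so the clique search is where the real work lies.
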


\begin{proof}[Proof (exhaustive verification)]
Write $T=(\Vone\setminus S)\cup R$ with $S=\Vone\setminus T$ and $|S|=r$.  Then
$|T\cap\Vone|=21-r$ and $|R|=r+1$.  For $T$ to be $4$-general, every point of
$R$ must lie outside every plane spanned by a triple of $\Vone\setminus S$; such
a plane would otherwise contain four points of $T$.  Put
\[
  \mathrm{hol}(S):=\{p\in\PG(5,4)\setminus\Vone\;:\;
   p\notin\langle x,y,z\rangle \ \text{for all}\ \{x,y,z\}\subseteq\Vone\setminus S\}.
\]
Then $R\subseteq\mathrm{hol}(S)$.  Moreover $\mathrm{hol}(\emptyset)=\emptyset$
because $\Vone$ is complete, so $r\ge1$.

All $\binom{21}{r}$ subsets $S$ of $\Vone$ were enumerated for $r=1,\dots,8$.
Two things were computed for each $S$.  The first is the exact size of
$\mathrm{hol}(S)$, which gives the \emph{necessary} condition
$|\mathrm{hol}(S)|\ge r+1$.  The second, whenever that condition held, is an
explicit search for an $(r+1)$-subset $R\subseteq\mathrm{hol}(S)$ such that
$(\Vone\setminus S)\cup R$ is $4$-general.  Within the compatibility graph on
$\mathrm{hol}(S)$ two points $p,q$ are adjacent iff
$(\Vone\setminus S)\cup\{p,q\}$ contains no coplanar quadruple.  This is a
necessary condition, so the search loses no candidate; every survivor is then
verified in full, over all $\binom{22}{4}$ quadruples.  No candidate survived
for any $S$.

The resulting counts are
\[
\begin{array}{c|cccccccc}
 r & 1 & 2 & 3 & 4 & 5 & 6 & 7 & 8\\ \hline
 \binom{21}{r} & 21 & 210 & 1330 & 5985 & 20\,349 & 54\,264 & 116\,280 & 203\,490\\
 \max_S|\mathrm{hol}(S)| & 0 & 0 & 0 & 4 & 8 & 24 & 57 & 146\\
 \#\{S:|\mathrm{hol}(S)|\ge r+1\} & 0 & 0 & 0 & 0 & 924 & 51\,429 & 116\,280 & 203\,490
\end{array}
\]
For $r\le6$ the number of feasible $S$ is small enough for every candidate $R$
to be listed and tested individually.  For $r\ge7$ all $\binom{21}{r}$ subsets
are feasible in the weak sense, so the clique search is essential there.  At
$r=8$ the holes are large, up to $146$ points, which is why that level is by far
the most expensive.  In every case the outcome was negative, so $r\ge9$, i.e.\
$|T\cap\Vone|=21-r\le12$.
\end{proof}

\begin{remark}[What the rungs below imply]\label{rem:spread}
Theorem~\ref{thm:n4} constrains a hypothetical $22$-point set $T$ from below as
well.  Since $4$-generality is intrinsic, $T\cap H$ is a $4$-general set of a
hyperplane $H\cong\PG(4,4)$ and $T\cap S$ one of a solid
$S\cong\PG(3,4)$; hence
\[
  |T\cap H|\le\Mb{4}{4}=11, \qquad |T\cap S|\le5
\]
for every hyperplane $H$ and every solid $S$, in addition to the trivial bound
$|T\cap\pi|\le3$ for every plane $\pi$.  So a $22$-point set of $\PG(5,4)$ must
be spread out over the space and not merely free of coplanar quadruples.  This
is a constraint supplied by the rungs below rather than by the anchor $\Vone$,
and unlike Proposition~\ref{prop:rig} it is unconditional.
\end{remark}

\begin{remark}\label{rem:rigread}
Proposition~\ref{prop:rig} is conditional in form, and vacuous if no
$22$-point set exists.  Its content is that a search should not look near
$\Vone$: at least nine of its $21$ points must go, and $\Vone$ and its orbit
are the only published extremal configurations at this rung, the further
constructions of \cite[Prop.~5.3 and Sec.~5.3]{Pavese} reaching only $15$ and
$17$ points.
\end{remark}

\begin{remark}[The family $\{\mathcal{V}_\alpha\}$ is smaller than the orbit]\label{it:family}
Let $H_1,H_2$ be the $6\times21$ matrices whose columns are the points of $\Vone$
and of the set $W$ of the $21$ column points of a parity check matrix of a
$[21,15,5]_4$ code.  By Lemma~\ref{lem:red} both are $4$-general, and
\[
  |W\cap\Vone|=0
  \qquad\text{and}\qquad
  |W\cap\mathcal{V}_\alpha|\le 2 \quad\text{for all }\alpha\in\F_{4^3}^* .
\]
So $W$ is disjoint from $\Vone$ and is \emph{not} one of the $63$ sets
$\mathcal{V}_\alpha$ that partition $\PG(5,4)\setminus(\Pi_1\cup\Pi_2)$
\cite[Sec.~4.2, property (iv)]{Pavese}; both intersection statements were
checked by direct point-counting against these sets.  Nevertheless $W$ is
\emph{equivalent} to $\Vone$: a collineation $g$ with $g(\Vone)=W$ exists if
and only if the $[21,6]_4$ codes spanned by the rows of $H_1$ and $H_2$ are
monomially equivalent, and a direct
test with \textsc{Magma}'s \texttt{IsEquivalent} confirms that this equivalence
holds.  Both codes have monomial automorphism group of order
$378 = 2\cdot3^3\cdot7$, so the stabiliser of $\Vone$ in $\PGL(6,4)$ has order
$126=378/3$, the kernel being the scalar matrices.  Pavese's family
$\{\mathcal{V}_\alpha\}$ is therefore a
\emph{proper} subset of the orbit $\PGL(6,4)\cdot\Vone$.  Since the bound of
Proposition~\ref{prop:rig} is orbit-invariant, it holds verbatim with $\Vone$
replaced by any member of that orbit.  By \cite[Sec.~4.2, property (ii)]{Pavese}, $\Vone$
carries a group of order $21$ acting regularly on its points; since $21\nmid22$,
no $22$-point set can carry that symmetry, which rules out the most obvious
equivariant generalisations of $\Vone$ as candidates.  The anchor $\Vone$ was
independently reconstructed from \cite[Thm.~4.6]{Pavese}; see
Appendix~\ref{app:cert}.
\end{remark}

\section{Concluding remarks and open problems}\label{sec:concl}

The reductions of \S\ref{sec:red}--\S\ref{sec:ladder} give a complete answer at
$n=4$ and leave the first open rung at $n=5$.  The value $\Mb{4}{4}=11$
(Theorem~\ref{thm:n4}) fills the entry $q=4$, $n=4$ that is absent from the
tables of \cite{Pavese}, and shows that the Frobenius-twisted construction of
\cite[Thm.~4.4]{Pavese} is optimal there; its upper bound admits both the table
route of Lemma~\ref{lem:chain} and the table-free route of
Lemma~\ref{lem:large4gen} with Proposition~\ref{prop:nmds}, and the second
identifies $\Mb{4}{4}$ with the maximal length $m'(5,4)$ of a quaternary near-MDS
code of dimension five (Proposition~\ref{prop:coinc}), a quantity tabulated
independently of \cite{codetables}.  This is also why the extremal sizes in
$\PG(4,3)$ and $\PG(4,4)$ agree.  The same machinery improves \eqref{eq:prop31} to
$\Mb{6}{4}\le59$ through the linear programming bound
(Remark~\ref{rem:strength}), and gives $\Mb{4}{5}=12$
(Proposition~\ref{prop:q5}).  That the ladder stops at $n=4$ has two causes.
Quantitatively, the next rung has length $22$, where the tables distinguish
$\mathrm{Lb}$ from $\mathrm{Ub}$, whereas at length $12$ the two coincide;
structurally, the near-MDS condition that supplies the second proof reads ``no
four points coplanar'' only in $\PG(4,q)$ (Remark~\ref{rem:why4}).  Finally,
Proposition~\ref{prop:rig} constrains any hypothetical $22$-point set $T$: it
meets the extremal $21$-point set $\Vone$ in at most twelve points.  Since the
bound is orbit-invariant (Remark~\ref{it:family}), the same holds for every set
in the orbit $\PGL(6,4)\cdot\Vone$.  An unconditional constraint on the same
$T$, independent of the anchor, is supplied by the rungs below and recorded in
Remark~\ref{rem:spread}.

The open question of the ladder is whether $\PG(5,4)$ contains a $22$-point
$4$-general set, equivalently whether a $[22,16,5]_4$ code exists, equivalently
whether $\mathrm{Lb}(22,16)=4$ improves to $5$.  No bound known to us excludes
$22$.  The verified position is $21\le\Mb{5}{4}\le30$; we do not adopt the
tabulated $\le29$, which rests on a private communication rather than on a
published proof.  By Proposition~\ref{prop:rig} such a set would lie
far from the only published extremal configuration; since the statement is
orbit-invariant, the natural strengthening is to replace $\Vone$ by one
representative for each collineation orbit of complete $21$-point $4$-general
sets of $\PG(5,4)$, only one such orbit being known, and to bound the
intersection with each in turn.  The aim is the unconditional assertion that no
$22$-point set exists.  Two further questions are to classify those orbits and to
decide whether the constructions of \cite{Pavese} can be beaten for $d\ge3$.  The
neighbouring cell $q=5$, $n=4$ is not blank, and it is not reached by the bridge:
$\Mb{4}{5}=12$ (Proposition~\ref{prop:q5}), one above the near-MDS lower bound
(Remark~\ref{rem:nmds11}) and one below the value $13$ that the failed bridge
would
have predicted, the extremal configuration of Appendix~\ref{app:B} being
\emph{not} an NMDS-set; we have not found $\Mb{4}{5}$ stated in the literature,
Table~2 of \cite{Pavese} covering $n=4$ only for $q=3$.  The bridge of
Lemma~\ref{lem:large4gen} fails over $\F_5$ (Remark~\ref{rem:nogens}) because
$[6,2,\ge5]_5$ exists, so the cell cannot be read off from near-MDS codes at all,
and the climb question remains open over $\F_5$ as well.

\backmatter

\bmhead{Acknowledgements}

The authors thank Ruihu Li for helpful suggestions on the manuscript.  All
computational examples in this paper were obtained using \textsc{Magma}~\cite{BCP97}.

\section*{Declarations}

\begin{itemize}
\item \textbf{Funding:} This work was supported by the National Natural Science
  Foundation of China under Grant No.~U21A20428 and the Natural Science
  Foundation of Shaanxi under Grant No.~2025-JC-YBQN-070.
\item \textbf{Competing interests:} The authors declare no competing interests.
\item \textbf{Data availability:} All constructions and code parameters established
  in this work are fully tabulated in the article.  No external data sets were used.
\item \textbf{Code availability:} The \textsc{Magma} scripts used for the code-table
  computations of \S\ref{sec:red}--\S\ref{sec:ladder} and for the verification of
  the rigidity statement of \S\ref{sec:rig}, and the Python programs of the machine
  certificates in Appendix~\ref{app:cert}, are available as ancillary files
  with the arXiv version of this paper.
\item \textbf{Author contribution:} All authors contributed to the conception and
  design of the study and to the writing and revision of the manuscript.  All
  authors read and approved the final manuscript.
\item \textbf{Use of AI tools:} The authors used an AI-based language-model
  assistant for language editing, \LaTeX{} typesetting and
  literature search during the preparation of this manuscript.  The authors
  reviewed and verified all output and take full responsibility for the content
  of the publication.
\end{itemize}

\begin{appendices}

\section{Machine certificates for Proposition~\ref{prop:rig}}\label{app:cert}

This appendix records enough detail for the computation of
Proposition~\ref{prop:rig} to be reproduced and, more importantly, to be
\emph{checked}.  All programs work with the model in which $\PG(5,4)$ is
represented by the $1365$ canonical representatives of the non-zero vectors of
$\F_4^6$, the field being $\F_4=\{0,1,\omega,\omega^2\}$ with
$\omega^2=\omega+1$.  The anchor $\Vone$ is rebuilt from
$\F_{64}=\F_4[t]/(t^3+t+1)$ by
$\Vone=\{P(x,x^{-2}):x\in\F_{64}^*\}$ with $x^{-2}=x^{61}$, or read from the
equivalent list of $21$ points.

\subsection{Positive controls}\label{app:A1}

The programs abort unless the following two checks pass, so a failure of the
anchor construction cannot pass silently.
\begin{enumerate}
\item \emph{$\Vone$ is $4$-general}: of the $\binom{21}{4}=5985$ quadruples of
$\Vone$, none is coplanar.
\item \emph{$\Vone$ is complete}: the $1330$ planes spanned by triples of
$\Vone$ cover all $1365$ points of $\PG(5,4)$.  Consequently no point outside
$\Vone$ can be added, i.e.\ $\mathrm{hol}(\emptyset)=\emptyset$, and $r\ge1$.
\end{enumerate}

\subsection{The programs}\label{app:A2}

\begin{itemize}
\item[\textbf{P1}] Sets.  For each $S$, it builds the set of
points covered by the planes of $\Vone\setminus S$ by \emph{enumerating each span
explicitly}, from a basis and all $4^{d}$ coordinate tuples, and taking a union of
sets.  No index table, no cache, no incremental criterion.
\item[\textbf{P2}] Counters.  Depth-first over $S$, maintaining for
each point of $\PG(5,4)$ a counter of how many planes of the current
$\Vone\setminus S$ contain it; the holes are the points whose counter drops to
zero.  Contains no compatibility graph and no clique search.
\item[\textbf{P3}] Cliques.  Pre-indexes the $1330$ planes once
and stores, for each of the $\binom{21}{2}=210$ pairs and each of the $1365$
points, the index of the plane through them; the search for $R$ is then a
pivoted clique search on the compatibility graph with three extra necessary
conditions: a point of $\Vone\setminus S$ coplanar with two points of $R$,
dependence of four points inside $R$, and a degree bound.  Every survivor is
verified over all $\binom{22}{4}$ quadruples.
\end{itemize}

Programs P1 and P2 are deliberately naive.  They compute only the
\emph{necessary} quantity $|\mathrm{hol}(S)|$, and can therefore reproduce the
hole statistics but not the final answer.  P3 is the program that decides the
non-existence of a $22$-point set, and it is the one that requires a control.

\subsection{Agreement of the hole statistics}\label{app:A3}

The three programs produce the following table, cell by cell, for every
$r\in\{1,\dots,8\}$.
\begin{center}
\begin{tabular}{@{}rccccccccc@{}}
\toprule
$r$ & $1$ & $2$ & $3$ & $4$ & $5$ & $6$ & $7$ & $8$\\
\midrule
$\binom{21}{r}$ & $21$ & $210$ & $1330$ & $5985$ & $20\,349$ & $54\,264$ & $116\,280$ & $203\,490$\\
$\max_S|\mathrm{hol}(S)|$ & $0$ & $0$ & $0$ & $4$ & $8$ & $24$ & $57$ & $146$\\
feasible $S$ & $0$ & $0$ & $0$ & $0$ & $924$ & $51\,429$ & $116\,280$ & $203\,490$\\
\bottomrule
\end{tabular}
\end{center}

The entry $\max_S|\mathrm{hol}(S)|=146$ at $r=8$ deserves a comment.  The largest
holes are large, so at $r=8$ a naive enumeration of all $(r+1)$-subsets of
$\mathrm{hol}(S)$ is out of the question ($\binom{146}{9}\approx2.6\cdot10^{12}$),
and the whole weight of the argument rests on the clique search of P3 being
complete and correct.  This is exactly what the control of \S\ref{app:A4} is designed to
test.

\subsection{Positive control for the clique search}\label{app:A4}

The clique search of P3 was validated against a planted solution.  Take a second
complete $21$-point $4$-general set $W$ of $\PG(5,4)$, namely the column set of a
parity check matrix of a projective $[21,15,5]_4$ code.  This $W$ is disjoint
from $\Vone$ and is not one of the $63$ sets $\mathcal{V}_\alpha$ (see
Remark~\ref{it:family}).  Remove $9$ of its points, put the remaining $12$ points
in place of $\Vone\setminus S$, and run the same clique search with target
size $9$.  The computation finds a $9$-clique, and the resulting $21$ points are
\emph{exactly} $W$.  The run takes a few seconds and is far smaller than the
$\Vone$ computation, but it exercises the same code path, namely the same
compatibility test and the same backtracking.  A search that cannot recover a
planted solution of this size would not be trusted to return a negative answer
on the real input.  The control certifies that the clique search finds cliques
that are present; the main computation asserts that at $r\le8$ there are none.

\subsection{Complete re-verification of the largest holes}\label{app:A5}

Because $r=8$ is the boundary case, the $126$ subsets $S$ with
$|\mathrm{hol}(S)|=146$, the largest holes and therefore the most dangerous
instances, were \emph{all} re-checked by a fourth, deliberately simple program,
denoted P4 below.  It builds the compatibility graph directly
from the criterion ``$(\Vone\setminus S)\cup\{p,q\}$ contains no coplanar
quadruple'', without any plane indexing, and runs a pivoted Bron--Kerbosch search
for a $9$-clique.  The program first re-derived the list of $126$ subsets from
scratch, by scanning all $\binom{21}{8}=203\,490$ subsets $S$ and recomputing
$\mathrm{hol}(S)$ from the triple spans of $\Vone\setminus S$.  This recovered
$\max_S|\mathrm{hol}(S)|=146$ and exactly $126$ subsets attaining it, in agreement
with the histogram of \S\ref{app:A3}.  On \emph{all} $126$ subsets the search
returned no clique, in exact agreement with P3.  The run takes $1730$\,s for the
$126$ instances.  The largest-hole layer of the $r=8$ case has therefore been
re-verified completely by an independent program, and not merely sampled.  The
remaining $r=8$ subsets, with smaller holes, rest on P3 together with the control
of \S\ref{app:A4}.

\subsection{Environment}\label{app:A6}

The programs were run on a single desktop machine.  Timings:
P1 and P2 reach $r=8$ in $192$\,s and $102$\,s respectively.  P3 requires about
$12.8$\,h for the full $r\le8$ run, the bulk of it at $r=8$.  The independent
complete re-verification of \S\ref{app:A5} takes $1730$\,s.  All intermediate
counts reported above were reproduced in each run.

\section{The extremal \texorpdfstring{$12$}{12}-point
\texorpdfstring{$4$}{4}-general set of
\texorpdfstring{$\PG(4,5)$}{PG(4,5)}}\label{app:B}

The five rows below are the coordinates $x_1,\dots,x_5$ of the twelve points
certifying Proposition~\ref{prop:q5}; the columns are the points, each
normalised so that its first non-zero coordinate is $1$.
\[
\begin{array}{lcccccccccccc}
x_1&1&1&1&0&0&1&1&1&1&1&1&1\\
x_2&4&3&3&1&0&1&4&4&3&2&0&2\\
x_3&1&4&4&0&0&4&1&3&1&0&0&2\\
x_4&4&3&2&3&1&4&1&1&1&3&1&0\\
x_5&0&1&1&4&3&1&2&0&0&0&1&4
\end{array}
\]
The twelve columns are non-zero and pairwise non-proportional, so they are
twelve distinct points of $\PG(4,5)$; their span has rank $5$, so they generate
the space.  None of the $\binom{12}{4}=495$ quadruples is coplanar, so the set
is $4$-general; $97$ of the $\binom{12}{5}=792$ five-point subsets lie on a
solid, and $7$ of the $\binom{12}{6}=924$ six-point subsets do, so the set
violates the third condition in the definition of an NMDS-set.  The whole
computation was performed by two programs using independent linear algebra over
$\F_5$, one evaluating Leibniz determinants and one Gaussian elimination.

A second set with the same properties, \emph{disjoint} from the first, is
\[
\begin{array}{lcccccccccccc}
x_1&1&1&1&1&0&1&1&0&1&1&1&1\\
x_2&1&2&4&3&1&0&4&0&0&2&0&2\\
x_3&2&1&2&2&2&1&1&0&2&3&3&3\\
x_4&3&1&1&4&0&3&3&0&4&2&2&4\\
x_5&2&1&1&3&1&0&4&1&3&3&2&2
\end{array}
\]
so the extremal sets of size $12$ are not unique; in this second set only $4$ of
the six-point subsets lie on a solid.  Both sets were found by independent
search strategies, one purely randomised and one seeded inside a solid, and both
saturated at size $12$, in agreement with the upper bound of
Proposition~\ref{prop:q5}.

\end{appendices}


\end{document}